\newtheorem{theo}{Theorem}[section]
\newtheorem{lemm}[theo]{Lemma}
\newtheorem{defi}[theo]{Definition}
\newtheorem{rema}[theo]{Remark}
\numberwithin{equation}{section}
\begin{document}

\title{linearized inverse problem for biharmonic operators at high frequencies}
\author{Xiaomeng Zhao and Ganghua Yuan}
\address{KLAS, School of Mathematics and Statistics, Northeast Normal University,
Changchun, Jilin, 130024, China}
\email{zhaoxm600@nenu.edu.cn}
\email{yuangh925@nenu.edu.cn}
\thanks{MSC: 35R30, 31B30.}
\thanks{
The second author is supported by NSFC grants 11771074 and National Key R\&D Program of China (No. 2020YFA0714102).}

\keywords{increasing stability, inverse boundary value problem, biharmonic equation}

\begin{abstract}
In this paper, we study the phenomenon of increasing stability in the inverse boundary value problems for the biharmonic equation. By considering a linearized form, we obtain an increasing Lipschitz-like stability when $k$ is large. Furthermore, we extend the discussion to the linearized inverse biharmonic potential problem with attenuation, where an exponential dependence of the attenuation constant is traced in the stability estimate.

\end{abstract}

\maketitle

\section{introduction}
In this paper, we consider the boundary-value problem for a perturbation of the biharmonic operator posed in a bounded domain $\Omega\subset\mathbb{R}^n$, $n\geq3$  with smooth boundary, equipped with the Navier boundary conditions, that is,
\begin{align}
\left\{
\begin{aligned}
	(\Delta^2-k^4+q)u&=0\quad \mbox{in }\Omega,\\
	u&=f\quad \mbox{on }\partial\Omega,\\
	\Delta u&=g \quad \mbox{on }\partial\Omega,
\end{aligned}
\right.
\label{m1}\end{align}
where $f\in H^{7/2}(\partial\Omega)$ and $g\in H^{3/2}(\partial\Omega)$. Biharmonic operators (with potentials) are widely studied in the context of modelling of hinged elastic beams and suspension bridges. We would like to refer to \cite{gazzola2010polyharmonic} for a discussion of these models and other applications.

If $k$ is not an eigenvalue of $(\Delta^2-k^4+q)u=0$ on the set $E=\{u\in H^4(\Omega):u|_{\partial\Omega}=\Delta u|_{\partial\Omega}=0\}$, there exists a unique solution to the problem (\ref{m1}) when $(f,g)\in H^{\frac{7}{2}}(\partial\Omega)\times H^{\frac{3}{2}}(\partial\Omega)$.

Let us define the set  $$\mathcal{Q}:=\{r\in L^\infty(\Omega):0 \ \mbox{is not an eigenvalue of }(\Delta^2-r) \ \mbox{on } E\}.$$

The Dirichlet-to-Neumann(DtN) map $\mathcal{N}_q$ can be defined as
\begin{align}
\begin{split}
\mathcal{N}_q:H^{\frac{7}{2}}(\partial\Omega)\times H^{\frac{3}{2}}(\partial\Omega)&\rightarrow H^{\frac{5}{2}}(\partial\Omega)\times H^{\frac{1}{2}}(\partial\Omega),\\
(f,g)&\mapsto \left(\frac{\partial u}{\partial \nu}|_{\partial \Omega},\frac{\partial(\Delta u)}{\partial \nu}|_{\partial \Omega}\right),
\end{split}\label{m1.2}
\end{align}
where $u\in H^4(\Omega)$ is the unique solution to (\ref{m1}).

On the space $H^\alpha(\partial\Omega)\times H^\beta(\partial\Omega)$ (for simplicity we will denote this space by $H^{\alpha,\beta}(\partial\Omega)$), we shall consider the norm
$$\|(f,g)\|_{H^{\alpha,\beta}(\partial\Omega)}:=\|f\|_{H^\alpha(\partial\Omega)}+\|g\|_{H^\beta(\partial\Omega)}.$$
Define: $$\|\mathcal{N}_q\|:=\mbox{sup}\{\|\mathcal{N}_q(f,g)\|_{H^{\frac{5}{2},\frac{1}{2}}(\partial\Omega)}:\|(f,g)\|_{H^{\frac{7}{2},\frac{3}{2}}(\partial\Omega)}=1\},$$
where $\mathcal{N}_q(f,g)$ is defined in (\ref{m1.2}).

Our aim, in this article, is to address the question of stability in inverse boundary value problems for perturbed biharmonic operators. That is, to determine the potential $q$ from DtN map. In recent years, this kind of inverse problems have been studied by researchers and some related results have been obtained. For example, stability estimates of logarithmic type for this problem were obtained in  \cite{choudhury2017stability} and \cite{choudhury2015stability} when $k=0$.

 According to the work \cite{mandache2001exponential}, logarithmic stability estimates are expected to be optimal for such inverse boundary value problems when the operator $\Delta^2$ is replaced by $\Delta$ and $k=0$.  See also \cite{ssylbekov2017determining},
 \cite{assylbekov2017determining},
\cite{bhattacharyya2019inverse}, \cite{choudhury2015stability},
\cite{ghosh2016determination},
\cite{ikehata1991special},
\cite{isakov1991completeness},
\cite{krupchyk2016inverse},
\cite{krupchyk2012determining}, \cite{krupchyk2014inverse},
\cite{serov2016borg} for this kind of inverse boundary value problems for perturbed biharmonic and polyharmonic operators.
The logarithmic stability estimates indicate that the inverse boundary value problem in question is severely ill-posed, which makes it most challenging to design reconstruction algorithms with high resolution in practice, since small errors in measurements may result in exponentially large errors in the reconstruction of the unknown potential.

However, in recent years, it has been widely observed in different inverse boundary value problems
that the stability estimate improves with growing wavenumbers (or energy) both
analytically and numerically. The increasing stability in the inverse boundary value
problem for the Schr\"odinger equation was firstly observed within different ranges
of the wavenumbers in \cite{isakov2011increasing}. For related stability estimates at high frequencies  for several fundamental inverse boundary value problems, we can refer to  \cite{hrycak2004increased},
\cite{isakov2007increased},  \cite{isakov2016increasing}, \cite{iixsakov2014increasing},   \cite{isakov2013increasing}, \cite{krupchyk2019stability}, \cite{liang2015increasing},
 \cite{liu2019stability} for example. Among those, in \cite{liu2019stability}, the author obtained an increasing H\"older-type stability for the biharmonic operator when $k$ is large.

Very recently, a linearized form of the inverse Schr\"odinger potential
problem  with a large wavenumber is considered in \cite{isakovlu2020linearized}, \cite{zou2022linearized}, \cite{isakovluxu2022linearized}. Such a
linearization is carried out at a zero potential function which is reasonable when the potential is sufficiently
small compared with the squared wavenumber $k^2$. Noting that the squared
wavenumber $k^2$ may be large, it is reasonable to reconstruct a potential function of
moderate amplitude. It shows that this kind of  linearization method can improve stability dramatically for a larger
$k$ (see e.g. \cite{isakovlu2020linearized}, \cite{zou2022linearized}, \cite{isakovluxu2022linearized}). And the linearization method can be adapted to deal with inverse boundary problems for some nonlinear Schr\"odinger operators (see e.g. \cite{lu2022increasing}).

Another topic on increasing stability is multi-frequency set-up uses finite observations for multiple wavenumbers.
To have an overview of such results, we recommend a recent review paper [3] which
nicely summarizes the theoretical and numerical evidences verifying the increasing
stability in inverse medium and source problems for acoustic Helmholtz equations
and Maxwell equations. For this topic, we can also see e.g.  \cite{bao2020stability}, \cite{chengIslu2016stability}, \cite{isakovlu2018multi}, \cite{li2016increasing}, \cite{li2017stability},  \cite{li2021stability}, \cite{li2021stability2}, \cite{li2020stability}.

Inspired by the formulation of the problem in \cite{isakovlu2020linearized}, in this paper, we investigate a linearized form of the inverse  boundary problem for (\ref{m1}) with a large wavenumber in three and higher dimensions.
 And we obtain an increasing Lipschitz-type stability estimate for the linearized inverse boundary problem for the biharmonic equation with a large wavenumber. Furthermore, we provide some extended discussion on the linearized inverse biharmonic potential problem with attenuation and we  obtain an increasing stability with the explicit dependance of  the attenuation constant.

The remainder of the paper is organized as follows.  In section 2, we introduce the linearized problem and prove an increasing Lipschitz-like stability estimate for the linearized inverse biharmonic potential problem with a large wavenumber by using bounded complex exponential solutions. We extend, in section 3, the discussion of the inverse biharmonic potential problem with attenuation. Finally a conclusion section 4 ends the manuscript with further prospects.

\section{Increasing stability in the Linearized inverse biharmonic potential problem at the large wavenumber}
In this section, we provide notations and statement of the main result for the Linearized inverse biharmonic potential problem. Some extended discussions on the linearized DtN map will also be provided.

\subsection{Linearized inverse biharmonic potential problem and main result}
We recall the original problem, which initializes the current work, is to find the potential function $q=q(x)$ defined in a bounded domain $\Omega$ in the following problem
\begin{align}
\left\{
\begin{aligned}
	(\Delta^2-k^4+q)u&=0\quad \mbox{in }\Omega,\\
	u&=f\quad \mbox{on }\partial\Omega,\\
	\Delta u&=g \quad \mbox{on }\partial\Omega,
\end{aligned}
\right.
\label{f}\end{align}
from the knowledge of the DtN map
$$\mathcal{N}_q :(f,g)\longmapsto(\partial_\nu u,\partial_\nu (\Delta u)) \quad \mbox{on }\partial\Omega.$$

If we assume that $q$ is small (or $k$ is large), we can justify the linearization of the biharmonic equation. More precisely, let $u_0$, $u_1$ solve the following subproblems
\begin{align}
\left\{
\begin{aligned}
\Delta^2u_0-k^4u_0&=0\quad \mbox{in }\Omega,\\
	u_0&=f\quad \mbox{on }\partial\Omega,\\
	\Delta u_0&=g \quad\mbox{on }\partial\Omega,
\end{aligned}
\right.
\label{m2}\end{align}
\begin{align}
\left\{
\begin{aligned}
	\Delta^2u_1-k^4u_1&=-qu_0&&\mbox{in } \Omega,\\
	u_1&=0 &&\mbox{on }\partial\Omega,\\
	\Delta u_1&=0 &&\mbox{on }\partial\Omega,
\end{aligned}
\right.
\label{m3}\end{align}
then the solution $u$ of the original problem (\ref{m1}) is
$$u=u_0+u_1+\cdots$$where the remaining ``$\cdots$" are ``higher" order terms. The linearized DtN map $\mathcal{N}_q^1$ of $\mathcal{N}_q$ in (\ref{m1.2}) is defined accordingly as
\begin{align}
	\mathcal{N}_q^1:(f,g)\longmapsto(\partial_\nu u_1,\partial_\nu (\Delta u_1)) \quad \mbox{on }\partial\Omega.
	\label{m4}
\end{align}

We consider the stability question for the determination of $q$ from the  linearized DtN map $\mathcal{N}_q^1$ defined by (\ref{m4}).

To begin with, let us recall the Green's formula
\begin{align*}
	\int_\Omega(\Delta^2u)v\,dx-\int_\Omega u(\Delta^2v)\,dx=&\int_{\partial\Omega}\partial_\nu(\Delta u)v\,dS+\int_{\partial\Omega}\partial_\nu u(\Delta v)\,dS\\
&-\int_{\partial\Omega}(\Delta u)\partial_\nu v\,dS-\int_{\partial\Omega}u(\partial_\nu(\Delta v))\,dS
\end{align*}
for $u,v\in H^4(\Omega)$.

Let  $v\in H^4(\Omega)$ be the solution to$$\Delta^2v-k^4v=0\quad\mbox{in }\Omega,$$
multiplying both sides of the subproblem (\ref{m3}) with by $v$, using the Green's formula, we obtain
\begin{align}
	\int_{\partial\Omega}\partial_\nu(\Delta u_1)v\,dS+\int_{\partial\Omega}\partial_\nu u_1(\Delta v)\,dS=-\int_\Omega qu_0v\,dx
\label{e1}\end{align}
We now state the main stability estimate below.
\begin{theo}
Let $n\geq 3$, $s>n/2$. Given any wavenumber $k\geq 1$  such that $k^4\in\mathcal{Q}$, assume the potential function $q\in H^s(\Omega)$ with supp$q\subset\Omega, \| q\|_{H^{s}(\Omega)}\leq M$. Then the following estimate holds true
\begin{align}\|q\|_{H^{-s}(\Omega)}\leq Ck^7\|\mathcal{N}_q^1\|+C(k+\log\frac{1}{\|\mathcal{N}_q^1\|^2})^{-(2s-n)}
\label{mm}\end{align}	
for the linearized system (\ref{m2})-(\ref{m3}) with  $\|\mathcal{N}_q^1\|\leq 1/e$ and the constant $C$ depends only on $n,s,\Omega, M$ and \mbox{supp}q.

\label{th1}	
\end{theo}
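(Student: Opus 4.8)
The plan is to test the weak identity (\ref{e1}) against \emph{exact} exponential solutions of the biharmonic equation, thereby recovering the Fourier transform of $q$, and then to trade the growth of these solutions against the a priori bound $\|q\|_{H^s(\Omega)}\le M$ in a frequency decomposition. Because the operators appearing in (\ref{m2}) and in $\Delta^2 v-k^4 v=0$ have constant coefficients, no complex geometric optics with remainder terms is needed: the factorization $\Delta^2-k^4=(\Delta-k^2)(\Delta+k^2)$ shows that $e^{i\zeta\cdot x}$ solves $\Delta^2 w-k^4 w=0$ precisely when $\zeta\cdot\zeta=\pm k^2$. First I would fix a frequency $\xi\in\mathbb{R}^n$ and look for $\zeta_1,\zeta_2\in\mathbb{C}^n$ with $\zeta_1+\zeta_2=\xi$ and $\zeta_j\cdot\zeta_j=k^2$; then I set $v=e^{i\zeta_1\cdot x}$ and $u_0=e^{i\zeta_2\cdot x}$, the latter generating the boundary data $f=u_0|_{\partial\Omega}$, $g=\Delta u_0|_{\partial\Omega}$ in (\ref{m2}). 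Since $u_0v=e^{i\xi\cdot x}$ and $\mathrm{supp}\,q\subset\Omega$, the right-hand side of (\ref{e1}) equals $-\widehat{q}(\xi)$ up to a fixed sign convention.

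Writing $\zeta_j=\eta_j+i\tau_j$ with real $\eta_j,\tau_j$, the constraint $\zeta_j\cdot\zeta_j=k^2$ forces $\eta_j\perp\tau_j$ and $|\eta_j|^2-|\tau_j|^2=k^2$, while $\zeta_1+\zeta_2=\xi$ forces $\tau_2=-\tau_1=:\tau$ and $\eta_1+\eta_2=\xi$. The decisive point is that for $|\xi|\le 2k$ one may choose $\tau=0$ with $\eta_1,\eta_2$ real of length $k$ summing to $\xi$, so that $u_0$ and $v$ are purely oscillatory and uniformly bounded; only for $|\xi|>2k$ is one forced to take $|\tau|=\sqrt{|\xi|^2/4-k^2}\le|\xi|/2$, which on a domain contained in a ball of radius $R$ introduces the factor $e^{R|\tau|}$. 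I would then bound the left-hand side of (\ref{e1}) by duality, using $\|\partial_\nu u_1\|_{H^{5/2}},\|\partial_\nu(\Delta u_1)\|_{H^{1/2}}\le\|\mathcal{N}_q^1\|\,\|(f,g)\|_{H^{7/2,3/2}(\partial\Omega)}$ together with trace estimates for the exponential solutions; since $\Delta u_0=-k^2u_0$ and $\Delta v=-k^2v$, each boundary norm is a fixed power of $(1+|\zeta_j|)$ times $e^{R|\tau|}$. Collecting the factors yields a bound of the form
\begin{align}
|\widehat{q}(\xi)|\le Ck^{7}e^{2R|\tau|}\|\mathcal{N}_q^1\|,
\label{plan-key}
\end{align}
in which the exponential factor is absent (that is, $|\tau|=0$) whenever $|\xi|\le 2k$.

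With (\ref{plan-key}) available I would split, for a radius $\rho\ge 2k$ to be chosen,
\begin{align*}
\|q\|_{H^{-s}(\Omega)}^2\le C\int_{|\xi|\le\rho}(1+|\xi|^2)^{-s}|\widehat{q}(\xi)|^2\,d\xi+C\int_{|\xi|>\rho}(1+|\xi|^2)^{-s}|\widehat{q}(\xi)|^2\,d\xi.
\end{align*}
On the tail I use only the a priori bound: since $|\widehat{q}(\xi)|\le C\|q\|_{L^1(\Omega)}\le CM$ and $s>n/2$, the second integral is $\le CM^2\rho^{-(2s-n)}$. On the low-frequency part I insert (\ref{plan-key}): the range $|\xi|\le 2k$ carries no exponential factor and produces the Lipschitz term $Ck^{7}\|\mathcal{N}_q^1\|$, while on $2k<|\xi|\le\rho$ the factor $e^{2R|\tau|}\le e^{R\rho}$ has to be compensated by $\|\mathcal{N}_q^1\|$. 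Choosing $\rho$ of the order of $k+\log\frac{1}{\|\mathcal{N}_q^1\|^2}$ (with the constant tuned to $R$), so that $e^{R\rho}\|\mathcal{N}_q^1\|$ stays bounded while $\rho\ge 2k$, balances the two mechanisms; taking square roots and using $\|\mathcal{N}_q^1\|\le 1/e$ then gives the asserted estimate (\ref{mm}).

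I expect the main obstacle to be twofold. First, the trace estimates for the exponential solutions must be carried out carefully enough to pin down the polynomial power of $k$ and, above all, to confirm that the exponential factor genuinely vanishes for $|\xi|\le 2k$; this is precisely the mechanism that upgrades a logarithmic estimate to the increasing, Lipschitz-like estimate. Second, the optimization in $\rho$ is delicate, since one must guarantee that the exponentially growing contribution of the intermediate band $2k<|\xi|\le\rho$ does not overwhelm the algebraically decaying tail; it is this balance that fixes the cutoff at $\rho\sim k+\log\frac{1}{\|\mathcal{N}_q^1\|^2}$ and thereby accounts for the logarithm appearing inside the second term of (\ref{mm}).
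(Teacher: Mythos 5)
Your first half is sound and is essentially the paper's own argument (its Lemmas 2.6 and 2.7): the same Calder\'on--Faddeev exponentials $e^{\mathrm{i}\zeta\cdot x}$ with $\zeta\cdot\zeta=k^2$ inserted into the identity (\ref{e1}), the same duality and trace estimates producing the power $k^{7}$, and the same three-band frequency decomposition. Your variant in which the imaginary part $\tau$ is taken to be zero for $|\xi|\le 2k$ is a clean touch (the paper instead keeps an auxiliary parameter $a=R$ there and carries a harmless constant $e^{CR}$), and your tail bound $|\widehat{q}(\xi)|\le CM$ replaces, with no loss, the paper's interpolation-plus-absorption step with the parameter $\varepsilon$.

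The gap is in the final optimization, which is exactly where the paper spends the bulk of its proof. Your prescription --- choose $\rho$ of order $k+\log\frac{1}{\|\mathcal{N}_q^1\|^2}$ with the constant tuned to $R$ ``so that $e^{R\rho}\|\mathcal{N}_q^1\|$ stays bounded while $\rho\ge 2k$'' --- cannot be carried out, because the two requirements are incompatible. The hypotheses allow $\|\mathcal{N}_q^1\|$ to be a fixed number $\le 1/e$ while $k\to\infty$; then $\rho\ge 2k$ forces $e^{R\rho}\|\mathcal{N}_q^1\|\ge e^{2Rk}\|\mathcal{N}_q^1\|\to\infty$, so no choice of the constant keeps the exponential factor under control. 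Moreover, even in the regime $\log\frac{1}{\|\mathcal{N}_q^1\|^2}\gg k$, where the two requirements are compatible, mere boundedness of $e^{R\rho}\|\mathcal{N}_q^1\|$ is not enough: the intermediate band contributes $Ck^{14}e^{2R\rho}\|\mathcal{N}_q^1\|^2$, and boundedness only yields $Ck^{14}$, which is useless; this term must be driven below $\left(k+\log\frac{1}{\|\mathcal{N}_q^1\|^2}\right)^{-2(2s-n)}$. The missing idea is the dichotomy the paper introduces. If $k+R\ge p\log\frac{1}{\|\mathcal{N}_q^1\|^2}$, cut at $\rho=2k$ (the paper's $T=k+R$): the intermediate band is then empty, and the tail term $(2k)^{-(2s-n)}$ is comparable to $\left(k+\log\frac{1}{\|\mathcal{N}_q^1\|^2}\right)^{-(2s-n)}$ precisely because $k$ dominates the logarithm in this case. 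If instead $k+R\le p\log\frac{1}{\|\mathcal{N}_q^1\|^2}$, take $\rho=p\log\frac{1}{\|\mathcal{N}_q^1\|^2}$ with $p$ small relative to $1/R$ (the paper's condition (\ref{q8})); then $e^{2R\rho}\|\mathcal{N}_q^1\|^2\le\|\mathcal{N}_q^1\|$, and one must still absorb the factor $k^{14}$ and produce the logarithm, which uses $\log k\le\log p+\log\log\frac{1}{\|\mathcal{N}_q^1\|^2}$ (valid only in this case) together with the elementary maximization of $-\frac{1}{2}z+(2(2s-n)+14)\log z$ --- the computations (\ref{q6})--(\ref{q9}) in the paper. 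Without this case split and bookkeeping your sketch does not yield (\ref{mm}); with them it does, and it then coincides with the paper's proof.
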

\begin{rema}
From estimate (\ref{mm}), it is obvious that the stability behaves more like Lipschitz type when $k$ is large. So we call it increasing Lipschitz-like stability in this paper.

\end{rema}
\subsection{Extended discussion on the linearized DtN map}
In this subsection, we analyze the linearization estimate and state the relation between the nonlinear operator $\mathcal{N}_q$ and the linearized DtN operator $\mathcal{N}_q^1$ if the potential function q is assumed to be sufficiently small. To this end, we define the following two linear operators.

\begin{defi}For all $q \in \mathcal{Q} \subset L^\infty (\Omega)$, we define the Poisson operator $\mathcal{P}_q:H^{\frac{7}{2}}(\partial\Omega)\times H^{\frac{3}{2}}(\partial\Omega)\rightarrow H^4(\Omega), \mathcal{P}_q(f,g)=u$
 for the problem
\begin{align*}
\left\{
\begin{aligned}
\Delta^2u-qu&=0\quad \mbox{in }\Omega,\\
	u&=f\quad \mbox{on }\partial\Omega,\\
	\Delta u&=g \quad\mbox{on }\partial\Omega.
\end{aligned}
\right.
\end{align*}
In addition, for any source function $F\in L^2(\Omega)$, we define the Green's operator $\mathcal{G}_q:L^2(\Omega)\rightarrow H^4(\Omega),\mathcal{G}_q(F)=v$ for the problem
\begin{align*}
\left\{
\begin{aligned}
\Delta^2v-qv&=-F\quad \mbox{in }\Omega,\\
	v&=0\quad \mbox{on }\partial\Omega,\\
	\Delta v&=0 \quad\mbox{on }\partial\Omega,
\end{aligned}
\right.
\end{align*}
\end{defi}
When the potential function is sufficiently small such that $\|q\|_{L^\infty}<\eta $ and $k^4-q\in \mathcal{Q}$, the DtN map for the problem (\ref{f}) can be presented with the above definitions as $$\mathcal{N}_q(f,g)=(\partial_\nu(\mathcal{P}_{k^4-q}(f,g))|_{\partial \Omega}, \partial_\nu(\Delta( \mathcal{P}_{k^4-q}(f,g)))|_{\partial \Omega}).$$
If we further define $\mathcal{M}_q$ to be the multiplication operator by $q\in L^\infty(\Omega)$, the linearized DtN map defined in (\ref{m4}) can be reformulated as
$$\mathcal{N}_q^1(f,g)=(\partial_\nu(\mathcal{G}_{k^4}\mathcal{M}_q\mathcal{P}_{k^4}(f,g))|_{\partial\Omega},\partial_\nu(\Delta( \mathcal{G}_{k^4}\mathcal{M}_q\mathcal{P}_{k^4}(f,g)))|_{\partial\Omega}).$$

The following theorem states the relation between the nonlinear operator $\mathcal{N}_q$ and the linearized DtN operator $\mathcal{N}_q^1$ if the potential function $q$ is assumed to be sufficiently small.
\begin{theo}
Given any $k > 0$ such that $k^4\in \mathcal{Q}$, and let $\eta$ be a sufficiently small constant satisfying $k^4-q\in \mathcal{Q}$ for any potential function $q\in L^\infty(\Omega)$ with $\|q\|_{L^\infty(\Omega)}<\eta$. Then there holds
\begin{align}
	\|\mathcal{N}_q-\mathcal{N}_q^0-\mathcal{N}_q^1\|\leq C\|q\|_{L^\infty(\Omega)}^2,
\end{align}
where $\mathcal{N}_q^0(f,g):=(\partial_\nu(\mathcal{P}_{k^4}(f,g))|_{\partial\Omega},\partial_\nu(\Delta(\mathcal{P}_{k^4}(f,g)))|_{\partial\Omega})$ and the constant $C$ depends on $n,\Omega$ and $k$.	
\end{theo}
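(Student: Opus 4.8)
The plan is to reduce the statement to an interior estimate on solutions and then transfer it to the boundary via the trace theorem, the interior estimate being produced by a Neumann--series (resolvent) expansion of the Poisson operator in the small parameter $\|q\|_{L^\infty(\Omega)}$. Write $u:=\mathcal{P}_{k^4-q}(f,g)$ and $u_0:=\mathcal{P}_{k^4}(f,g)$. Subtracting the two Navier problems, the difference $w:=u-u_0$ solves $\Delta^2 w-k^4 w=-qu$ in $\Omega$ with $w=\Delta w=0$ on $\partial\Omega$, so by the definition of the Green operator $w=\mathcal{G}_{k^4}\mathcal{M}_q u$. This gives the exact identity
\begin{align}
u=u_0+\mathcal{G}_{k^4}\mathcal{M}_q u .
\end{align}
Abbreviating $A_q:=\mathcal{G}_{k^4}\mathcal{M}_q$, for $\|q\|_{L^\infty(\Omega)}$ small the operator $I-A_q$ is invertible on $H^4(\Omega)$ and $u=(I-A_q)^{-1}u_0$.

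Next I would isolate the quadratic remainder. The term defining the linearized map is $u_1=\mathcal{G}_{k^4}\mathcal{M}_q u_0=A_q u_0$, so setting $r:=u-u_0-u_1$ and using $u-u_0=A_q u$ gives $r=A_q(u-u_0)=A_q^2 u$. By construction $(\mathcal{N}_q-\mathcal{N}_q^0-\mathcal{N}_q^1)(f,g)$ is precisely the pair of boundary traces $(\partial_\nu r|_{\partial\Omega},\partial_\nu(\Delta r)|_{\partial\Omega})$, so the whole theorem reduces to the interior bound $\|r\|_{H^4(\Omega)}\le C\|q\|_{L^\infty(\Omega)}^2\|(f,g)\|_{H^{7/2,3/2}(\partial\Omega)}$ together with the trace estimate $\|(\partial_\nu r,\partial_\nu(\Delta r))\|_{H^{5/2,1/2}(\partial\Omega)}\le C\|r\|_{H^4(\Omega)}$.

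The required operator bounds are then collected: $\mathcal{M}_q$ maps $H^4(\Omega)\to L^2(\Omega)$ with $\|\mathcal{M}_q\|\le\|q\|_{L^\infty(\Omega)}$; the Green operator $\mathcal{G}_{k^4}$ maps $L^2(\Omega)\to H^4(\Omega)$ boundedly by elliptic regularity for the Navier biharmonic problem, which is well posed since $k^4\in\mathcal{Q}$; and $\mathcal{P}_{k^4}$ maps $H^{7/2,3/2}(\partial\Omega)\to H^4(\Omega)$ boundedly. Hence $A_q$ is bounded on $H^4(\Omega)$ with $\|A_q\|_{\mathcal{L}(H^4)}\le C(n,\Omega,k)\|q\|_{L^\infty(\Omega)}$, and choosing $\eta$ so small that $C(n,\Omega,k)\,\eta<1/2$ makes $\|(I-A_q)^{-1}\|\le 2$. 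Chaining the estimates, $\|u\|_{H^4(\Omega)}\le 2\|u_0\|_{H^4(\Omega)}\le C\|(f,g)\|_{H^{7/2,3/2}(\partial\Omega)}$ and $\|r\|_{H^4(\Omega)}=\|A_q^2 u\|_{H^4(\Omega)}\le\|A_q\|^2\|u\|_{H^4(\Omega)}\le C\|q\|_{L^\infty(\Omega)}^2\|(f,g)\|_{H^{7/2,3/2}(\partial\Omega)}$. Combining with the trace estimate and taking the supremum over $\|(f,g)\|_{H^{7/2,3/2}(\partial\Omega)}=1$ gives the claim, with $C$ depending on $n,\Omega,k$.

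The main obstacle --- really the only nonroutine ingredient --- is the uniform $L^2(\Omega)\to H^4(\Omega)$ boundedness of $\mathcal{G}_{k^4}$, which rests on $H^4$ elliptic regularity for the fourth-order Navier boundary value problem and on the unique solvability guaranteed by $k^4\in\mathcal{Q}$; this is exactly what controls $\|A_q\|$ and fixes the smallness threshold $\eta=\eta(n,\Omega,k)$ for which the Neumann series converges and $I-A_q$ is boundedly invertible. Since these norms grow with $k$, the constant $C$ must depend on $k$, in agreement with the statement. Everything else --- the algebraic identity $r=A_q^2u$, the multiplier bound for $\mathcal{M}_q$, and the trace estimate --- is routine.
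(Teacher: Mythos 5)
Your proposal is correct and follows essentially the same route as the paper's proof: both rest on the identity $u-u_0=\mathcal{G}_{k^4}\mathcal{M}_q u$, the invertibility of $I-\mathcal{G}_{k^4}\mathcal{M}_q$ under the smallness assumption on $\|q\|_{L^\infty(\Omega)}$, the observation that the remainder equals $\mathcal{G}_{k^4}\mathcal{M}_q(u-u_0)$ (your $A_q^2u$), and the trace theorem. The only difference is a trivial rearrangement: you bound $\|u\|_{H^4(\Omega)}\le 2\|u_0\|_{H^4(\Omega)}$ and then $\|A_q^2u\|$, whereas the paper bounds $\|u-u_0\|_{H^4(\Omega)}\le 2\|\mathcal{G}_{k^4}\mathcal{M}_qu_0\|_{H^4(\Omega)}$ first and then applies $\mathcal{G}_{k^4}\mathcal{M}_q$ once more.
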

\begin{rema}
Theorem 2.4 shows that if $\|q\|_{L^\infty(\Omega)}$ is small enough, then the error between the observation date of the original inverse boundary problem and the observation date of the linearized inverse boundary problem will be small enough.
\end{rema}
\begin{proof}
This proof is inspired by \cite{zou2022linearized}.

For all $(f,g) \in H^{\frac{7}{2}}(\partial\Omega)\times H^{\frac{3}{2}}(\partial\Omega)$, by the assumptions $k^4 \in\mathcal{Q}$ and $k^4-q\in\mathcal{Q}$, we see that
\begin{align*}
	0&=(\Delta^2-k^4+q)\mathcal{P}_{k^4-q}(f,g)-(\Delta^2-k^4)\mathcal{P}_{k^4}(f,g)\\
	&=(\Delta^2-k^4)(\mathcal{P}_{k^4-q}(f,g)-\mathcal{P}_{k^4}(f,g))+\mathcal{M}_q\mathcal{P}_{k^4-q}(f,g),
\end{align*}
which gives $$ (\Delta^2-k^4)(\mathcal{P}_{k^4-q}(f,g)-\mathcal{P}_{k^4}(f,g))=-\mathcal{M}_q\mathcal{P}_{k^4-q}(f,g) \quad \mbox{in}\ \Omega.$$	
Since $\mathcal{P}_{k^4-q}(f,g)-\mathcal{P}_{k^4}(f,g) =0$ on $\partial\Omega$, $\Delta(\mathcal{P}_{k^4-q}(f,g)-\mathcal{P}_{k^4}(f,g)) =0$ on $\partial\Omega$, from the definition of $\mathcal{G}_{k^4}$ we see that
\begin{align}
	\mathcal{P}_{k^4-q}(f,g)-\mathcal{P}_{k^4}(f,g)=\mathcal{G}_{k^4}\mathcal{M}_q\mathcal{P}_{k^4-q}(f,g) \quad \mbox{in}\ \Omega.
\label{h1}\end{align}
 Hence we know that
 \begin{align}
 	\begin{split}
 		&(\mathcal{N}_q-\mathcal{N}_q^0-\mathcal{N}_q^1)(f,g)\\
 		=&(\partial_\nu(\mathcal{P}_{k^4-q}(f,g)-\mathcal{P}_{k^4}(f,g)-\mathcal{G}_{k^4}\mathcal{M}_q\mathcal{P}_{k^4}(f,g))|_{\partial \Omega}, \\
 		&\partial_\nu(\Delta( \mathcal{P}_{k^4-q}(f,g))-\Delta(\mathcal{P}_{k^4}(f,g))-\Delta(\mathcal{G}_{k^4}\mathcal{M}_q\mathcal{P}_{k^4}(f,g)))|_{\partial \Omega})\\
 		=&(\partial_\nu(\mathcal{G}_{k^4}\mathcal{M}_q\mathcal{P}_{k^4-q}(f,g)-\mathcal{G}_{k^4}\mathcal{M}_q\mathcal{P}_{k^4}(f,g))|_{\partial \Omega},\\
 		&\partial_\nu(\Delta( \mathcal{G}_{k^4}\mathcal{M}_q\mathcal{P}_{k^4-q}(f,g))-\Delta (\mathcal{G}_{k^4}\mathcal{M}_q\mathcal{P}_{k^4}(f,g)))|_{\partial \Omega})\\
 		=&(\partial_\nu(\mathcal{G}_{k^4}\mathcal{M}_q(\mathcal{P}_{k^4-q}(f,g)-\mathcal{P}_{k^4}(f,g)))|_{\partial \Omega},\\
 		&\partial_\nu(\Delta( \mathcal{G}_{k^4}\mathcal{M}_q(\mathcal{P}_{k^4-q}(f,g)-\mathcal{P}_{k^4}(f,g))))|_{\partial \Omega})\\
 		:=&(\partial_\nu w|_{\partial\Omega},\partial_\nu (\Delta w)|_{\partial\Omega})
 	\end{split}
 \label{h7}\end{align}
by writing $w:=\mathcal{G}_{k^4}\mathcal{M}_q(\mathcal{P}_{k^4-q}(f,g)-\mathcal{P}_{k^4}(f,g)).$
	
When the smallness assumption
$$\|q\|_{L^\infty(\Omega)}<\frac{1}{2}	\|\mathcal{G}_{k^4}\|^{-1}_{\mathcal{L}(L^2(\Omega),H^4(\Omega))}$$holds, $(I-\mathcal{G}_{k^4}\mathcal{M}_q)$ is invertible in $H^4(\Omega)$. The equation (\ref{h1}) turns out that
\begin{align}
\mathcal{P}_{k^4-q}(f,g)-\mathcal{P}_{k^4}(f,g)=(I-\mathcal{G}_{k^4}\mathcal{M}_q)^{-1}\mathcal{G}_{k^4}\mathcal{M}_q\mathcal{P}_{k^4}(f,g)	\quad \mbox{in}\ \Omega.
\label{h2}\end{align}	
From (\ref{h2}), we see that	
\begin{align}
\begin{split}
&\|\mathcal{P}_{k^4-q}(f,g)-\mathcal{P}_{k^4}(f,g)\|	_{H^4(\Omega)}\\
\leq & 2\|\mathcal{G}_{k^4}\mathcal{M}_q\mathcal{P}_{k^4}(f,g)\|_{H^4(\Omega)}\\
\leq & 2\|\mathcal{G}_{k^4}\|_{\mathcal{L}(L^2(\Omega),H^4(\Omega))}\|q\|_{L^\infty(\Omega)}\| \mathcal{P}_{k^4}\|_{\mathcal{L}(H^{\frac{7}{2}}(\partial\Omega)\times H^{\frac{3}{2}}(\partial\Omega), H^4(\Omega))}\|(f,g)\|_{H^{\frac{7}{2},\frac{3}{2}}(\partial\Omega)}\\
\leq & C\|q\|_{L^\infty(\Omega)}\|(f,g)\|_{H^{\frac{7}{2},\frac{3}{2}}(\partial\Omega)}.
\end{split}
\label{h3}\end{align}	
Therefore, by using (\ref{h3}), we have the $H^4$ estimate for $w$,
\begin{align}
	\begin{split}
		\|w\|_{H^4(\Omega)}\leq & \|\mathcal{G}_{k^4}\|_{\mathcal{L}(L^2(\Omega),H^4(\Omega))}\|q\|_{L^\infty(\Omega)}\|\mathcal{P}_{k^4-q}(f,g)-\mathcal{P}_{k^4}(f,g)\|	_{L^2(\Omega)}\\
		\leq &C\|q\|_{L^\infty(\Omega)}^2\|(f,g)\|_{H^{\frac{7}{2},\frac{3}{2}}(\partial\Omega)}.
	\end{split}
\label{h4}\end{align}	
By the trace theorem, we get
\begin{align}
	\|\partial_\nu w\|_{H^\frac{5}{2}(\partial\Omega)}\leq C\|w\|_{H^4(\Omega)}
\label{h5}\end{align}	
and
\begin{align}
		\|\partial_\nu (\Delta w)\|_{H^\frac{1}{2}(\partial\Omega)}\leq C\|w\|_{H^4(\Omega)}.
\label{h6}
\end{align}

Combining (\ref{h7}), (\ref{h4})-(\ref{h6}), by setting $\eta=\min\{\frac{1}{2}, \frac{1}{2}	\|\mathcal{G}_{k^4}\|^{-1}_{\mathcal{L}(L^2(\Omega),H^4(\Omega))}\}$, we conclude the proof.

\end{proof}

\subsection{Proof of Theorem \ref{th1}}
To prove Theorem \ref{th1}, we first derive two lemmas.
\begin{lemm}
Under the assumptions in Theorem \ref{th1},
\begin{align*}
	|\mathcal{F}q(r\omega)|\leq Ck^7e^{Ca}\|\mathcal{N}_q^1\|
\end{align*}
holds for $k\geq 1, r\geq 0, \omega \in \mathbb{R}^n$, with $|\omega|=1$ and $a>0$ with $k^2+a^2>r^2/4$, where $C>0$ depends only on $n,s,M,\Omega$.
\label{l1}\end{lemm}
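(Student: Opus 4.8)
The plan is to recover the Fourier transform $\mathcal{F}q(r\omega)$ directly from the boundary identity (\ref{e1}) by inserting suitable complex exponential (CGO) solutions of the homogeneous operator $\Delta^2-k^4$. Since $\Delta^2-k^4=(\Delta-k^2)(\Delta+k^2)$, any $w=e^{x\cdot\zeta}$ with $\zeta\in\mathbb{C}^n$ and $\zeta\cdot\zeta=-k^2$ satisfies $\Delta w=-k^2w$, hence $(\Delta^2-k^4)w=0$. I would therefore look for two such vectors $\zeta,\zeta'$ with $\zeta+\zeta'=-ir\omega$, so that the product of the corresponding solutions is the single Fourier mode $u_0v=e^{x\cdot(\zeta+\zeta')}=e^{-ir\,x\cdot\omega}$. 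Taking $u_0=e^{x\cdot\zeta}$ as the solution of (\ref{m2}) with Cauchy data $(f,g)=(u_0|_{\partial\Omega},\Delta u_0|_{\partial\Omega})$ and $v=e^{x\cdot\zeta'}$ as the test solution in (\ref{e1}), the right-hand side becomes $-\int_\Omega q\,e^{-ir\,x\cdot\omega}\,dx=-\mathcal{F}q(r\omega)$, because $\mathrm{supp}\,q\subset\Omega$.

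For the construction I would write $\zeta=-\tfrac{ir}{2}\omega+\mu$ and $\zeta'=-\tfrac{ir}{2}\omega-\mu$ with $\mu\in\mathbb{C}^n$, so that $\zeta+\zeta'=-ir\omega$ automatically. Imposing $\zeta\cdot\zeta=\zeta'\cdot\zeta'=-k^2$ reduces, after requiring $\omega\cdot\mu=0$, to the single scalar equation $\mu\cdot\mu=\tfrac{r^2}{4}-k^2$. Choosing three mutually orthogonal unit vectors $\omega,e_1,e_2$ (this is precisely where $n\geq3$ enters) and setting
\begin{align*}
\mu=a\,e_1+i\,b\,e_2,\qquad b=\sqrt{k^2+a^2-\tfrac{r^2}{4}},
\end{align*}
gives $\mu\cdot\mu=a^2-b^2=\tfrac{r^2}{4}-k^2$ as required, and the hypothesis $k^2+a^2>r^2/4$ is exactly what makes $b$ real so that the construction closes over $\mathbb{R}$. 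The quantitative features I would record are $|\mathrm{Re}\,\zeta|=|\mathrm{Re}\,\zeta'|=a$ and $|\zeta|=|\zeta'|=\sqrt{2a^2+k^2}\le\sqrt{2}\,(k+a)$.

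With these solutions in hand I would bound the left-hand side of (\ref{e1}) by the Cauchy--Schwarz inequality on $\partial\Omega$ together with the trace theorem and the definition of $\|\mathcal{N}_q^1\|$: estimating $\|\partial_\nu u_1\|_{L^2(\partial\Omega)}$ and $\|\partial_\nu(\Delta u_1)\|_{L^2(\partial\Omega)}$ by $\|\mathcal{N}_q^1(f,g)\|_{H^{5/2,1/2}(\partial\Omega)}\le\|\mathcal{N}_q^1\|\,\|(f,g)\|_{H^{7/2,3/2}(\partial\Omega)}$, and controlling the factors $\|v\|_{L^2}$ and $\|\Delta v\|_{L^2}$ through the embedding $H^{7/2,3/2}(\partial\Omega)\hookrightarrow L^2(\partial\Omega)$, I obtain
\begin{align*}
|\mathcal{F}q(r\omega)|\le C\|\mathcal{N}_q^1\|\,\|(f,g)\|_{H^{7/2,3/2}(\partial\Omega)}\,\big(\|v\|_{H^{7/2}(\partial\Omega)}+\|\Delta v\|_{H^{3/2}(\partial\Omega)}\big).
\end{align*}
The remaining task is the explicit boundary estimate $\|e^{x\cdot\zeta}\|_{H^m(\partial\Omega)}\le C(1+|\zeta|)^m e^{Ca}$, where the polynomial factor comes from tangential differentiation bringing down powers of $\zeta$ and the exponential $e^{Ca}$ from $\sup_{\partial\Omega}|e^{x\cdot\zeta}|\le e^{a\,\mathrm{diam}\,\Omega}$. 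Using $\Delta u_0=-k^2u_0$, $\Delta v=-k^2v$ and $|\zeta|\le\sqrt{2}(k+a)$, and absorbing the polynomial powers of $a$ into the exponential, each of the two Cauchy-data factors contributes $Ck^{7/2}e^{Ca}$, so their product gives the stated bound $Ck^{7}e^{Ca}\|\mathcal{N}_q^1\|$.

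The genuinely delicate points are, first, the CGO construction itself: the quadratic constraints must be closed over the reals, which is what forces $n\geq3$ and the admissibility condition $k^2+a^2>r^2/4$, and one must check that $u_0,v\in H^4(\Omega)$ solve the homogeneous equation while the solution $u_1\in H^4(\Omega)$ of (\ref{m3}) exists by $k^4\in\mathcal{Q}$. Second, and most importantly, the boundary Sobolev-norm estimates must be carried out with fully explicit dependence on both $k$ and $a$, since it is exactly the separation of the polynomial factor $k^7$ from the exponential factor $e^{Ca}$ that will later allow optimizing over the free parameter $a$ to produce the increasing-stability estimate (\ref{mm}). Keeping the $a$-growth confined to $e^{Ca}$ while extracting the clean power of $k$ is the main bookkeeping obstacle.
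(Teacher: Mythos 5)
Your proposal is correct and follows essentially the same route as the paper: the same Calder\'on-type complex exponential solutions (your $\zeta$ is just $\mathrm{i}$ times the paper's $\zeta_1$, with the same key quantities $|\mathrm{Re}\,\zeta|=a$ and $|\zeta|^2=k^2+2a^2$), the same substitution into identity (\ref{e1}) to produce $-\mathcal{F}q(r\omega)$, and the same Cauchy--Schwarz/operator-norm bookkeeping that separates the power $k^7$ from the factor $e^{Ca}$. The only cosmetic difference is that you estimate the boundary Sobolev norms of the exponentials directly, whereas the paper bounds $\|(f,g)\|_{H^{7/2,3/2}(\partial\Omega)}$ by the interior norms $\|u_0\|_{H^4(\Omega)}+\|\Delta u_0\|_{H^2(\Omega)}$ via the trace theorem; both yield the same bound $Ck^7e^{Ca}\|\mathcal{N}_q^1\|$.
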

\begin{proof}
The proof is based on the complex exponential solutions suggested by Calder\'on	\cite{calderon2006inverse} and Faddeev \cite{faddeev1966increasing} . Let us denote $\zeta_l=\eta_l+\text{i}\xi_l, l=1,2$. We can choose $\omega^{'}, \omega^{''}\in \mathbb{R}^n$ satisfying$$\omega\cdot\omega^{'}=\omega\cdot\omega^{''}=\omega^{'}\cdot\omega^{''}=0 \quad \mbox{and} \quad| \omega^{'}|=|\omega^{''}|=1.$$
Now we set
$$\eta_1=-\frac{r}{2}\omega+\sqrt{k^2+a^2-\frac{r^2}{4}}\omega^{'},\quad\xi_1=a\omega^{''},$$
$$\eta_2=-\frac{r}{2}\omega-\sqrt{k^2+a^2-\frac{r^2}{4}}\omega^{'},\quad\xi_2=-a\omega^{''},$$
provided$$k^2+a^2\geq\frac{r^2}{4}.$$
It is clear that$$\zeta_l\cdot\zeta_l=k^2,\quad\zeta_1+\zeta_2=-r\omega,\quad|\zeta_l|^2=k^2+2a^2, \quad\mbox{for } j=1,2.$$
Then we consider the following solutions
$$u_0(x)=e^{\text{i}\zeta_1\cdot x},\quad v(x)=e^{\text{i}\zeta_2\cdot x}.$$
From the equality (\ref{e1}) we derive
\begin{align}
	\int_{\partial\Omega}\partial_\nu(\Delta u_1)v\,dS+\int_{\partial\Omega}\partial_\nu u_1(\Delta v)\,dS=-\mathcal{F}q(r\omega).
\label{e2}\end{align}
Here $\mathcal{F}(\cdot)$ denotes the Fourier transform.

We now estimate the left hand side of (\ref{e2}). To do so, we observe that (see also \cite{choudhury2015stability})
\begin{align*}
&\left|\int_{\partial\Omega}\partial_\nu(\Delta u_1)v\,dS+\int_{\partial\Omega}\partial_\nu u_1(\Delta v)\,dS\right|\\\leq&\int_{\partial\Omega}|\partial_\nu(\Delta u_1)v|\,dS+\int_{\partial\Omega}|\partial_\nu u_1(\Delta v)|\,dS\\
\leq&\|\partial_\nu(\Delta u_1)\|_{L^2(\partial\Omega)}\|v\|_{L^2(\partial\Omega)}+\|\partial_\nu u_1\|_{L^2(\partial\Omega)}\|\Delta v\|_{L^2(\partial\Omega)}\\
\leq&C(\|\partial_\nu(\Delta u_1)\|_{L^2(\partial\Omega)}\|v\|_{H^1(\Omega)}+\|\partial_\nu u_1\|_{L^2(\partial\Omega)}\|\Delta v\|_{H^1(\Omega)})\\
\leq&C(\|\partial_\nu(\Delta u_1)\|_{L^2(\partial\Omega)}+\|\partial_\nu u_1\|_{L^2(\partial\Omega)})(\|v\|_{H^1(\Omega)}+\|\Delta v\|_{H^1(\Omega)})\\
\leq&C\|\left(\partial_\nu(\Delta u_1),\partial_\nu u_1\right)\|_{H^{\frac{1}{2},\frac{5}{2}}(\partial\Omega)}(\|v\|_{H^1(\Omega)}+\|\Delta v\|_{H^1(\Omega)})\\
=&C\|\mathcal{N}_q^1(f,g)\|_{H^{\frac{5}{2},\frac{1}{2}}(\partial\Omega)}(\|v\|_{H^1(\Omega)}+\|\Delta v\|_{H^1(\Omega)})\\
\leq&C\|\mathcal{N}_q^1\|\|(f,g)\|_{H^{\frac{7}{2},\frac{3}{2}}(\partial\Omega)}(\|v\|_{H^1(\Omega)}+\|\Delta v\|_{H^1(\Omega)}).
\end{align*}
By the trace theorem, it holds
\begin{align}
|\mathcal{F}q(r\omega)|=&\left|\int_{\partial\Omega}\partial_\nu(\Delta u_1)v\,dS+\int_{\partial\Omega}\partial_\nu u_1(\Delta v)\,dS\right|\nonumber\\
\leq&C\|\mathcal{N}_q^1\|(\|u_0\|_{H^4(\Omega)}+\|\Delta u_0\|_{H^2(\Omega)})(\|v\|_{H^1(\Omega)}+\|\Delta v\|_{H^1(\Omega)}).
\label{e3}\end{align}
Therefore, we shall now have to estimate the norms of $u_0, v$ and their derivatives that appear in the above expression. Since $\Omega$ is a bounded domain, we assume that $\Omega\in B(0,R_0)$ for some fixed $R_0>0$. Then $|e^{\text{i}x\cdot\zeta_l}|=e^{-x\cdot\xi_l}\leq e^{aR_0}$, since $|\xi_l|=a$ for $l=1,2$. We can estimate
\begin{align*}
&\|v\|_{H^1(\Omega)}=\left(\int_\Omega\left(1+|\zeta_2|^2\right)|e^{\text{i}x\cdot\zeta_2}|^2\,dx\right)^{\frac{1}{2}}\leq C\left(1+k^2+2a^2\right)^{\frac{1}{2}}e^{aR_0}\leq Cke^{Ca},\\
&\|\Delta v\|_{H^1(\Omega)}=\|k^2v\|_{H^1(\Omega)}\leq Ck^3e^{Ca},\\
&\|\Delta u_0\|_{H^2(\Omega)}=k^2\|u_0\|_{H^2(\Omega)}\leq  Ck^2\left[1+k^2+2a^2+(k^2+2a^2)^2\right]^{\frac{1}{2}}e^{aR_0}\leq Ck^4e^{Ca},\\
&\|u_0\|_{H^4(\Omega)}\leq  Ck^4e^{Ca}.
\end{align*}
Using these estimates in (\ref{e3}), we have
\begin{align*}
|\mathcal{F}q(r\omega)|&\leq C\|\mathcal{N}_q^1\|(k^4e^{Ca}+k^4e^{Ca})(ke^{Ca}+k^3e^{Ca})\\
&\leq Ck^7e^{Ca}\|\mathcal{N}_q^1\|,
\end{align*}
which completes the proof.

\end{proof}

The following lemma is a corollary of Lemma \ref{l1}.
\begin{lemm}
Suppose that the assumptions in Theorem \ref{th1} hold. Let $R>0$. Then for $k\geq 1, r\geq 0 \mbox{ and } \omega \in \mathbb{R}^n$, with $|\omega|=1$, the following estimates hold true: if $0\leq r \leq k+R $ then
\begin{align}
|\mathcal{F}q(r\omega)|	\leq Ck^7e^{CR}\|\mathcal{N}_q^1\|;
\label{i1}\end{align}
if $r\geq  k+R$ then
\begin{align}
|\mathcal{F}q(r\omega)|	\leq Ck^7e^{Cr}\|\mathcal{N}_q^1\|.
\label{i2}\end{align}
\end{lemm}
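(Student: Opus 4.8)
The plan is to read the statement off directly from Lemma \ref{l1}, exploiting the freedom in the auxiliary parameter $a$. Lemma \ref{l1} supplies the bound $|\mathcal{F}q(r\omega)|\le Ck^7 e^{Ca}\|\mathcal{N}_q^1\|$ for \emph{any} admissible $a>0$, the only constraint being $k^2+a^2>r^2/4$. Since $\mathcal{F}q(r\omega)$ does not depend on $a$, we are free to tune $a$ in each frequency regime so that the exponential factor matches the prescribed right-hand side. Thus the whole proof reduces to selecting $a$ in the two ranges of $r$ and checking admissibility.

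In the low-frequency band $0\le r\le k+R$ I would set $a=R$ (legitimate since $R>0$). The only thing to verify is $k^2+R^2>r^2/4$. From $r\le k+R$ we get $r^2/4\le(k+R)^2/4$, so it suffices to confirm the elementary inequality $(k+R)^2/4<k^2+R^2$; rearranging, this is $0<3k^2-2kR+3R^2=3\bigl(k-\tfrac{R}{3}\bigr)^2+\tfrac{8}{3}R^2$, which holds for every $R>0$. With $a=R$ admissible throughout the band, Lemma \ref{l1} gives $|\mathcal{F}q(r\omega)|\le Ck^7 e^{CR}\|\mathcal{N}_q^1\|$, i.e. (\ref{i1}). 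In the high-frequency band $r\ge k+R$ I would instead take $a=r$, for which admissibility $k^2+r^2>r^2/4$ is immediate, and positivity $a=r\ge k+R>0$ is automatic since $k\ge 1$ and $R>0$; Lemma \ref{l1} then yields $|\mathcal{F}q(r\omega)|\le Ck^7 e^{Cr}\|\mathcal{N}_q^1\|$, i.e. (\ref{i2}).

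I do not expect a genuine obstacle here, as the argument is essentially bookkeeping built on Lemma \ref{l1}; the one point requiring care is the admissibility check in the first case, where one must ensure that the fixed, $r$-independent choice $a=R$ still satisfies $k^2+a^2>r^2/4$ across the \emph{entire} band $0\le r\le k+R$. This is exactly where the algebraic inequality $(k+R)^2/4<k^2+R^2$ enters, and it is what justifies using the uniform value $R$ in place of the sharper but $r$-dependent threshold $a=\sqrt{\max\{0,\,r^2/4-k^2\}}$. The uniform choice is preferable because it produces the factor $e^{CR}$ directly, which is the form needed later to separate the low-frequency contribution (bounded by $e^{CR}\|\mathcal{N}_q^1\|$) from the high-frequency tail (controlled through the a priori bound on $\|q\|_{H^s}$ and the decay of $\mathcal{F}q$) when integrating these pointwise estimates to prove Theorem \ref{th1}.
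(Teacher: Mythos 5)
Your proposal is correct and follows exactly the paper's own argument: the paper proves this lemma in one line by taking $a=R$ for $0\le r\le k+R$ and $a=r$ for $r\ge k+R$ in Lemma \ref{l1}. The only difference is that you explicitly verify the admissibility condition $k^2+a^2>r^2/4$ via the inequality $(k+R)^2/4<k^2+R^2$, a check the paper leaves implicit; this is a welcome but minor addition, not a different route.
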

\begin{proof}
We can prove the lemma immediately by taking $a=R$ when $0\leq r \leq k+R $ , and taking $a=r$ when $ r \geq k+R $ in Lemma \ref{l1}.
\end{proof}
Now we prove our main theorem.
\begin{proof}[Proof of Theorem 2.1]
The proof is inspired by \cite{iixsakov2014increasing}, where the Schr{\"o}dinger equation have been considered. By writing in polar coordinates, we can obtain
\begin{align}
\|q\|^2_{H^{-s}(\mathbb{R}^n)}=C&\int_0^\infty\int_{|\omega|=1}|\mathcal{F}q(r\omega)|^2(1+r^2)^{-s}r^{n-1}\,d\omega dr \nonumber\\
=C&\left(\int_0^{k+R}\int_{|\omega|=1}|\mathcal{F}q(r\omega)|^2(1+r^2)^{-s}r^{n-1}\,d\omega dr\right.\nonumber\\
&+\int_{k+R}^T\int_{|\omega|=1}|\mathcal{F}q(r\omega)|^2(1+r^2)^{-s}r^{n-1}\,d\omega dr\nonumber\\
&\left.+\int_T^\infty\int_{|\omega|=1}|\mathcal{F}q(r\omega)|^2(1+r^2)^{-s}r^{n-1}\,d\omega dr\right)\nonumber\\
=:C&(I_1+I_2+I_3),
\label{qq}\end{align}
where $R>0$ is a positive constant and $T\geq k+R$	is parameter which will be chosen later.

Our task now is to estimate each integral separately. We begin with $I_3$. Since $|\mathcal{F}q(r\omega)|\leq C\|q\|_{L^2(\Omega)}, q\in H^s(\Omega)$ and $s>n/2$, we get
\begin{align}
	I_3&\leq C\int_T^\infty \|q\|_{L^2(\Omega)}^2(1+r^2)^{-s}r^{n-1}\, dr\leq CT^{-m}\|q\|_{L^2(\Omega)}^2\nonumber\\
	&\leq CT^{-m}\left(\varepsilon \|q\|_{H^{-s}(\Omega)}^2+\frac{1}{\varepsilon}\|q\|_{H^s(\Omega)}^2\right)\nonumber\\
	&\leq CT^{-m}\left(\varepsilon\|q\|_{H^{-s}(\mathbb{R}^n)}^2+\frac{M}{\varepsilon}\right)
\label{i3}\end{align}
for $\varepsilon>0$, where $m:=2s-n$.

On the other hand, by estimate (\ref{i1}), we can obtain
\begin{align}
	I_1&\leq C\int_0^{k+R}k^{14}e^{CR}\|\mathcal{N}_q^1\|^2(1+r^2)^{-s}r^{n-1}\,dr\nonumber\\
	&\leq Ck^{14}e^{CR}\|\mathcal{N}_q^1\|^2\int_0^\infty(1+r^2)^{-s}r^{n-1}\,dr\nonumber\\
	&=Ck^{14}e^{CR}\|\mathcal{N}_q^1\|^2.
\label{i4}\end{align}
In the same way, using estimate (\ref{i2}), we have	
\begin{align}
	I_2&\leq C\int_{k+R}^Tk^{14}e^{Cr}\|\mathcal{N}_q^1\|^2(1+r^2)^{-s}r^{n-1}\,dr\nonumber\\
	&\leq Ck^{14}\|\mathcal{N}_q^1\|^2\int_{k+R}^T e^{Cr}(1+r^2)^{-s}r^{n-1}\,dr\nonumber\\
	&\leq Ck^{14}\|\mathcal{N}_q^1\|^2e^{CT}\int_{k+R}^T(1+r^2)^{-s}r^{n-1}\,dr\nonumber\\
	&\leq Ck^{14}\|\mathcal{N}_q^1\|^2e^{CT}\int_0^\infty (1+r^2)^{-s}r^{n-1}\,dr\nonumber\\	
	&\leq Ck^{14}e^{CT}\|\mathcal{N}_q^1\|^2,
\label{i5}\end{align}
where $s>n/2$. Combining (\ref{i3})-(\ref{i5}) gives
\begin{align}
\|q\|^2_{H^{-s}(\mathbb{R}^n)}&\leq C(I_1+I_2+I_3)\nonumber\\
&\leq C	k^{14}e^{CR}\|\mathcal{N}_q^1\|^2+ Ck^{14}e^{CT}\|\mathcal{N}_q^1\|^2+ CT^{-m}\left(\varepsilon\|q\|_{H^{-s}(\mathbb{R}^n)}^2+\frac{M}{\varepsilon}\right)\nonumber\\
&= CT^{-m}\varepsilon\|q\|_{H^{-s}(\mathbb{R}^n)}^2+C	k^{14}e^{CR}\|\mathcal{N}_q^1\|^2+ Ck^{14}e^{CT}\|\mathcal{N}_q^1\|^2+\frac{CT^{-m}}{\varepsilon}.
\label{l2}\end{align}

To continue, we consider two cases:
$$(i)k+R\leq p\log\frac{1}{A}$$
and
$$(ii)k+R\geq p\log\frac{1}{A},$$
where $A=\|\mathcal{N}_q^1\|^2$ and $p>0$ is a constant which can be determined later.

For the case (i), taking $$\varepsilon=\frac{T^m}{2C}$$
and $R>0$,
we deduce that
\begin{align}
\|q\|^2_{H^{-s}(\mathbb{R}^n)}\leq C k^{14}A+ Ck^{14}e^{CT}A+CT^{-2m}
\label{q1}\end{align}
for any $T\geq k+R$ by (\ref{l2}).
Now we choose $T=p\log\frac{1}{A}$, which is greater than or equal to $k+R$ by the condition (i). Our current aim is to show that there exists $C_1\geq 0$ such that
\begin{align}
k^{14}e^{CT}A+T^{-2m}\leq 2C_1\left(k+\log\frac{1}{A}\right)^{-2m}.
\label{q2}\end{align}
Substituting (\ref{q2}) into (\ref{q1}) clearly implies (\ref{mm}). Now to derive (\ref{q2}), it is enough to prove that
\begin{align}
k^{14}e^{CT}A\leq C_1\left(k+\log\frac{1}{A}\right)^{-2m}	
\label{q3}\end{align}
and
\begin{align}
T^{-2m}\leq C_1\left(k+\log\frac{1}{A}\right)^{-2m}.
\label{q4}\end{align}
We note that (\ref{q4}) is equivalent to
\begin{align}
	C_1^{-1/2m}\left(k+\log\frac{1}{A}\right)\leq p\log\frac{1}{A} .
\label{q5}	\end{align}
Since we have $$k+\log\frac{1}{A}\leq (k+R)+\log\frac{1}{A}\leq (p+1)\log\frac{1}{A}$$
by (i), condition (\ref{q5}) holds whenever
\begin{align}
	C_1^{-1/2m}\leq \frac{p}{p+1}.	
\label{aa}\end{align}
Now we turn to (\ref{q3}). It is clear that (\ref{q3}) is equivalent to
\begin{align}
	14\log k+(Cp-1)\log \frac{1}{A} +2m\log \left(k+\log\frac{1}{A}\right)\leq \log C_1
\label{q6}\end{align}
since $T=p\log\frac{1}{A}$. Using (i), we can obtain
$$\log k\leq \log p+\log(\log\frac{1}{A}),$$
and
$$\log \left(k+\log\frac{1}{A}\right)\leq \log \left(p\log\frac{1}{A}+\log\frac{1}{A}\right)=\log(p+1)+\log(\log\frac{1}{A}).$$
Hence (\ref{q6}) is verified if we can show that
\begin{align}
14\log p+2m\log(p+1)+(Cp-1)\log \frac{1}{A} +(2m+14)\log(\log\frac{1}{A})\leq \log C_1.
\label{q7}\end{align}
Choosing
\begin{align}
	p\leq \frac{1}{2C},
\label{q8}\end{align}
we can bound the left-hand side of (\ref{q7}) by
\begin{align*}
	&(\mbox{LHS of }(\ref{q7}))\\
	&\leq 14\log\frac{1}{2C}+2m\log(\frac{1}{2C}+1)-\frac{1}{2}\log\frac{1}{A}+(2m+14)\log(\log\frac{1}{A})\\
	&\leq 14\log\frac{1}{2C}+2m\log(\frac{1}{2C}+1)+\max_{z\geq 2}\left(-\frac{1}{2}z+(2m+14)\log z\right)\\
	&=14\log\frac{1}{2C}+2m\log(\frac{1}{2C}+1)+2(m+7)(\log(4m+28)-1).
\end{align*}
Therefore, condition (\ref{q7}) (i.e.(\ref{q3})) is satisfied provided
\begin{align}
14\log\frac{1}{2C}+2m\log(\frac{1}{2C}+1)+2(m+7)(\log(4m+28)-1)\leq \log{C_1}.	
\label{q9}\end{align}

Next we consider case (ii). We choose $T=k+R$ and observe that the term $I_2$ in (\ref{qq}) does not appear in this case. Hence, instead of (\ref{l2}), we have
$$\|q\|^2_{H^{-s}(\mathbb{R}^n)}\leq CT^{-m}\varepsilon\|q\|_{H^{-s}(\mathbb{R}^n)}^2+C	k^{14}e^{CR}\|\mathcal{N}_q^1\|^2+\frac{CT^{-m}}{\varepsilon}.$$
Set $\varepsilon=T^m/2C$ and $R>0$, we obtain
\begin{align*}
\|q\|^2_{H^{-s}(\mathbb{R}^n)}&\leq C	k^{14}e^{CR}\|\mathcal{N}_q^1\|^2+CT^{-2m}\\
&\leq 	Ck^{14}\|\mathcal{N}_q^1\|^2+C(k+R)^{-2m}
\end{align*}
which implies the desired estimate (\ref{mm}) since from condition (ii) we have
$$k+R\geq \frac{k}{2}+\frac{k+R}{2}\geq \frac{k}{2}+\frac{p}{2}\log\frac{1}{A}\geq \frac{\min\{p,1\}}{2}\left(k+\log\frac{1}{A}\right).$$

As the last step, we choose appropriate $R$,  $p$ and $C_1$ to complete the proof. We first pick an arbitrary positive constant $R$ and then choose $p$ small enough satisfying (\ref{q8}). Finally, we take $C_1$ large enough satisfying (\ref{aa}) and (\ref{q9}).

\end{proof}

\section{Linearized inverse biharmonic potential problem at the large wavenumber with attenuation}
Furthermore we provide some extended discussion on the linearized inverse biharmonic potential problem with attenuation.

In current section, we investigate the stability estimate recovering the potential function $q=q(x)$ below
\begin{align}
\left\{
\begin{aligned}
\Delta^2u-(k^2+\text{i}kb)^2u+qu&=0\quad \mbox{in }\Omega,\\
	u&=f\quad \mbox{on }\partial\Omega,\\
	\Delta u&=g \quad\mbox{on }\partial\Omega,
\end{aligned}
\right.
\label{a1}\end{align}
from the knowledge of linearized DtN map for a large wavenumber $k$. The constant $b>0$ is the attenuation constant. For the sake of simplicity, we use the notations in section 2, for instance, that $u$ represents the solution of the biharmonic equation with attenuation. We suppose that there exists a unique solution to (\ref{a1}).

Referring to section 2, we let $u_0$, $u_1$ solve the following subproblems

\begin{align}
\left\{
\begin{aligned}
\Delta^2u_0-(k^2+\text{i}kb)^2u_0&=0\quad \mbox{in }\Omega,\\
	u_0&=f\quad \mbox{on }\partial\Omega,\\
	\Delta u_0&=g \quad\mbox{on }\partial\Omega,
\end{aligned}
\right.
\label{a2}\end{align}
\begin{align}
\left\{
\begin{aligned}
	\Delta^2u_1-(k^2+\text{i}kb)^2u_1&=-qu_0&&\mbox{in } \Omega,\\
	u_1&=0 &&\mbox{on }\partial\Omega,\\
	\Delta u_1&=0 &&\mbox{on }\partial\Omega,	
\end{aligned}
\right.
\label{a3}\end{align}
then the solution $u$ of the original problem (\ref{a1}) is
$$u=u_0+u_1+\cdots$$where the remaining ``$\cdots$" are ``higher" order terms. Similarly the linearized DtN map $\mathcal{N}_q^1$  is defined as
\begin{align}
	\mathcal{N}_q^1:(f,g)\longmapsto(\partial_\nu u_1,\partial_\nu (\Delta u_1)) \quad \mbox{on }\partial\Omega.
	\label{a4}
\end{align}

We consider the stability for the determination of $q$ from the  linearized DtN map $\mathcal{N}_q^1$ defined by (\ref{a4}).

Multiplying both sides of the subproblem (\ref{a3}) with a test function $v$ satisfying
$$\Delta^2v-(k^2+\text{i}kb)^2v=0\quad\mbox{in }\Omega,$$
we obtain the equality
\begin{align}
	\int_{\partial\Omega}\partial_\nu(\Delta u_1)v\,dS+\int_{\partial\Omega}\partial_\nu u_1(\Delta v)\,dS=-\int_\Omega qu_0v\,dx
\label{a5}\end{align}
by the Green's formula.

Since $\Omega$ is a bounded domain, we assume that $\Omega \in B(0,R_0)$ for some fixed $R_0>0$. Then we present the main stability estimate in this section.
\begin{theo}
Let $n\geq 3$, $s>n/2$. Assume the potential function $q\in H^s(\Omega)$ with supp$q\subset\Omega, \| q\|_{H^{s}(\Omega)}\leq M$. Then the following estimate holds true
\begin{align}\|q\|_{H^{-s}(\mathbb{R}^n)}\leq C (k+\sqrt{kb})^7e^{C\sqrt{kb}}\|\mathcal{N}_q^1\|+C(k+\sqrt{kb}+\log\frac{1}{\|\mathcal{N}_q^1\|^2})^{-(2s-n)}
\label{m}\end{align}	
for the linearized system (\ref{a2})-(\ref{a3}) with  $\|\mathcal{N}_q^1\|\leq 1/e$ and the constant $C$ depends only on $n,s,\Omega, M$ and \mbox{supp}q.

\label{th2}	
\end{theo}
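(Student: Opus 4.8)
The plan is to follow the same three-stage scheme as in the proof of Theorem~\ref{th1}: construct bounded complex exponential (Faddeev-type) solutions adapted to the attenuated operator, derive from the Green's identity (\ref{a5}) a pointwise bound on $\mathcal{F}q(r\omega)$, and then integrate that bound over frequency space in polar coordinates while optimizing a cut-off parameter $T$. The only genuinely new ingredient is the geometric-optics construction, because the characteristic equation changes from $\zeta\cdot\zeta=k^2$ to the complex condition $\zeta\cdot\zeta=k^2+\mathrm{i}kb$.

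Concretely, I would write $\zeta_l=\eta_l+\mathrm{i}\xi_l$, $l=1,2$, and impose $\eta_1+\eta_2=-r\omega$, $\xi_1+\xi_2=0$ together with the two real equations coming from $\zeta_l\cdot\zeta_l=k^2+\mathrm{i}kb$, namely $|\eta_l|^2-|\xi_l|^2=k^2$ and $2\,\eta_l\cdot\xi_l=kb$. The imaginary part $kb$ forces $\eta_l$ and $\xi_l$ to be non-orthogonal, so I would keep the three mutually orthogonal unit vectors $\omega,\omega',\omega''$ but now allow a component of $\eta_l$ along $\omega''$. A convenient choice is $\xi_1=a\,\omega''$, $\xi_2=-a\,\omega''$ and $\eta_1=-\tfrac r2\omega+c\,\omega'+\tfrac{kb}{2a}\,\omega''$, $\eta_2=-\tfrac r2\omega-c\,\omega'-\tfrac{kb}{2a}\,\omega''$, so that $\eta_1+\eta_2=-r\omega$ and $2\eta_l\cdot\xi_l=kb$ hold, where $c^2=k^2+a^2-\tfrac{r^2}{4}-\tfrac{k^2b^2}{4a^2}$. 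One checks $|\zeta_l|^2=k^2+2a^2$ as before, so $u_0=e^{\mathrm{i}\zeta_1\cdot x}$ and $v=e^{\mathrm{i}\zeta_2\cdot x}$ have growth governed by $e^{aR_0}$, but the reality requirement $c^2\ge0$ now reads $4a^4+(4k^2-r^2)a^2-k^2b^2\ge0$, which is solvable in $a$ only above a threshold of order $\sqrt{kb}$.

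With these solutions in hand I would reproduce the estimate of Lemma~\ref{l1}: substituting $u_0,v$ into (\ref{a5}), bounding the boundary terms by $\|\mathcal{N}_q^1\|$ through the trace theorem exactly as in (\ref{e3}), and estimating the Sobolev norms of $u_0,v$ and their Laplacians. The new feature is that $\Delta v=-(k^2+\mathrm{i}kb)v$, so each Laplacian contributes a factor $|k^2+\mathrm{i}kb|=k\sqrt{k^2+b^2}\le(k+\sqrt{kb})^2$; absorbing all polynomial dependence on $a$ into the exponential via $(k+a)^j\le Ck^je^{Ca}$ as in the non-attenuated case, the product of norms collapses to $C(k+\sqrt{kb})^7e^{Ca}$, giving $|\mathcal{F}q(r\omega)|\le C(k+\sqrt{kb})^7e^{Ca}\|\mathcal{N}_q^1\|$. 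The analogue of the corollary of Lemma~\ref{l1} then follows by taking $a$ of order $\sqrt{kb}+R$ when $0\le r\le k+\sqrt{kb}+R$ (which keeps $e^{Ca}\le e^{C\sqrt{kb}}$ and meets the threshold constraint) and $a=r$ when $r\ge k+\sqrt{kb}+R$ (for which $3r^4+4k^2r^2-k^2b^2\ge0$ holds automatically), yielding $|\mathcal{F}q(r\omega)|\le C(k+\sqrt{kb})^7e^{C\sqrt{kb}}\|\mathcal{N}_q^1\|$ and $|\mathcal{F}q(r\omega)|\le C(k+\sqrt{kb})^7e^{Cr}\|\mathcal{N}_q^1\|$ respectively.

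Finally I would run the same polar splitting $\|q\|^2_{H^{-s}}=C(I_1+I_2+I_3)$ as in (\ref{qq}), now cutting off at $k+\sqrt{kb}+R$ with the effective wavenumber $\kappa:=k+\sqrt{kb}$ playing the role of $k$: $I_1$ and $I_2$ inherit factors $\kappa^{14}e^{C\sqrt{kb}}A$ and $\kappa^{14}e^{CT}A$ with $A=\|\mathcal{N}_q^1\|^2$, while $I_3$ is the same interpolation tail $CT^{-m}(\varepsilon\|q\|^2_{H^{-s}}+M/\varepsilon)$. Carrying out the identical two-case optimization of $T$ (cases $\kappa+R\le p\log\tfrac1A$ and $\kappa+R\ge p\log\tfrac1A$) and taking square roots reproduces (\ref{m}), the factor $e^{C\sqrt{kb}}$ surviving in the first term because, unlike the constant $e^{CR}$ of the non-attenuated proof, it depends on $k$ and $b$ and cannot be absorbed into $C$. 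The main obstacle is precisely the geometric-optics step: one must confirm that the complex condition $\zeta\cdot\zeta=k^2+\mathrm{i}kb$ is realizable by real vectors with $|\xi_l|=a$ no larger than order $\sqrt{kb}$ for the relevant frequencies, since it is this threshold that produces both the attenuation factor $e^{C\sqrt{kb}}$ and the shift $k\mapsto k+\sqrt{kb}$ in the estimate; the remaining bookkeeping (trace estimates, absorption of $a$ into exponentials, the $T$-optimization) is routine once the proof of Theorem~\ref{th1} is in place.
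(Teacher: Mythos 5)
Your proposal is correct, and its global architecture --- the Green's identity (\ref{a5}), a pointwise bound on $\mathcal{F}q(r\omega)$ via exponential solutions, then the polar decomposition and the two-case optimization in $T$ --- is the same as the paper's. Where you genuinely differ is in the construction of the exponential solutions for the attenuated operator. The paper takes the $\omega'$-component of $\zeta_l$ to be the \emph{complex} square root $\sqrt{k^2+a^2-r^2/4+\mathrm{i}kb}=X+\mathrm{i}Y$ and bounds $Y\le\sqrt{kb/2}$, so its admissibility constraint remains $k^2+a^2\ge r^2/4$ (exactly as in the unattenuated case) while the exponential growth rate worsens to $C(\sqrt{kb}+a)$. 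You instead keep $\eta_l,\xi_l$ real and generate the imaginary part of $\zeta_l\cdot\zeta_l=k^2+\mathrm{i}kb$ by tilting $\eta_l$ with the component $\frac{kb}{2a}\omega''$; then the growth rate is exactly $a$ and $|\zeta_l|^2=k^2+2a^2$ exactly, but the price is the solvability constraint $c^2=k^2+a^2-\frac{r^2}{4}-\frac{k^2b^2}{4a^2}\ge 0$, which forces $a\gtrsim\sqrt{kb}$. The two constructions are dual --- attenuation in the rate versus attenuation in the constraint --- and they meet at the same parameter choices: both require $a=\sqrt{kb}+R$ (not merely $a=R$) on $0\le r\le k+\sqrt{kb}+R$ and $a=r$ beyond, hence both yield $|\mathcal{F}q(r\omega)|\le C(k+\sqrt{kb})^7e^{C\sqrt{kb}}\|\mathcal{N}_q^1\|$ resp.\ $C(k+\sqrt{kb})^7e^{Cr}\|\mathcal{N}_q^1\|$, after which the integration and optimization are identical to the paper's. (Your case-(i) bookkeeping is even marginally simpler: your high-frequency bound carries $e^{Cr}$ rather than the paper's $e^{C(\sqrt{kb}+r)}$, so the extra term $C\sqrt{kb}$ that the paper must absorb through condition (i) never appears.) The only items you should write out in full are the two constraint verifications, and both hold: with $a=\sqrt{kb}+R$ and $r\le k+\sqrt{kb}+R$, using $\frac{k^2b^2}{4a^2}\le\frac{kb}{4}$ and $r^2\le 3(k^2+kb+R^2)$ one gets $c^2\ge\frac{1}{4}(k^2+R^2)>0$; and with $a=r\ge k+\sqrt{kb}+R$, since $r^2\ge kb$ one gets $4k^2r^2+3r^4-k^2b^2\ge 4k^2r^2+2k^2b^2>0$.
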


Similar to the proof of Theorem \ref{th1}, we again derive two lemmas.

\begin{lemm}
Under the assumptions in Theorem \ref{th2},
\begin{align*}
	|\mathcal{F}q(r\omega)|\leq C(k+\sqrt{kb})^7e^{C(\sqrt{kb}+a)}\|\mathcal{N}_q^1\|\end{align*}
holds for $k\geq 1, r\geq 0, \omega \in \mathbb{R}^n$, with $|\omega|=1$ and $a>0$ with $k^2+a^2>r^2/4$, where $C>0$ depends only on $n,s,M,\Omega$.
\label{l21}\end{lemm}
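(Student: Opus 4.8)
The plan is to reproduce the complex geometric optics argument of Lemma \ref{l1}, adapting the Faddeev-type exponential solutions to the attenuated operator. The decisive change is that $u_0(x)=e^{\mathrm{i}\zeta_1\cdot x}$ and $v(x)=e^{\mathrm{i}\zeta_2\cdot x}$ must now solve $\Delta^2 w-(k^2+\mathrm{i}kb)^2w=0$, which forces $\zeta_l\cdot\zeta_l=k^2+\mathrm{i}kb$ in place of $\zeta_l\cdot\zeta_l=k^2$. Writing $\zeta_l=\eta_l+\mathrm{i}\xi_l$ with $\eta_l,\xi_l\in\mathbb{R}^n$, this splits into the real constraint $|\eta_l|^2-|\xi_l|^2=k^2$ and the genuinely new imaginary constraint $2\,\eta_l\cdot\xi_l=kb$; in addition I still require $\zeta_1+\zeta_2=-r\omega$, so that $u_0v=e^{-\mathrm{i}r\omega\cdot x}$ and the right-hand side of (\ref{a5}) equals $-\mathcal{F}q(r\omega)$.

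First I would construct such frequencies. Choosing orthonormal $\omega',\omega''\perp\omega$ (possible since $n\ge 3$), I set
\begin{align*}
&\eta_1=-\tfrac{r}{2}\omega+c\,\omega',\qquad \xi_1=d\,\omega'+a\,\omega'',\\
&\eta_2=-\tfrac{r}{2}\omega-c\,\omega',\qquad \xi_2=-d\,\omega'-a\,\omega'',
\end{align*}
so that $\zeta_1+\zeta_2=-r\omega$ and $\xi_2=-\xi_1$ hold automatically. A direct computation gives $\zeta_l\cdot\zeta_l=\tfrac{r^2}{4}+c^2-d^2-a^2+2\mathrm{i}cd$, so the two constraints reduce to $c^2-d^2=L$ and $2cd=kb$, where $L:=k^2+a^2-\tfrac{r^2}{4}\ge 0$ by the hypothesis $k^2+a^2>r^2/4$. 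Solving the pair yields $c^2=\tfrac12\big(\sqrt{L^2+k^2b^2}+L\big)$ and $d^2=\tfrac12\big(\sqrt{L^2+k^2b^2}-L\big)$, both nonnegative, so real $c,d$ exist.

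The quantitative heart of the argument is to bound these frequencies sharply. Since $\sqrt{L^2+k^2b^2}-L=\frac{k^2b^2}{\sqrt{L^2+k^2b^2}+L}\le\frac{k^2b^2}{kb}=kb$, I obtain $d^2\le kb/2$, hence $|\xi_l|^2=a^2+d^2\le a^2+kb/2$ and therefore $|\xi_l|\le C(a+\sqrt{kb})$, which produces the exponential factor $e^{|\xi_l|R_0}\le e^{C(a+\sqrt{kb})}$. Similarly $c^2\le L+kb/2$ gives $|\zeta_l|^2=|\eta_l|^2+|\xi_l|^2\le k^2+2a^2+kb\le C\big((k+\sqrt{kb})^2+a^2\big)$; and since $\Delta$ acts on these exponentials as multiplication by $-(k^2+\mathrm{i}kb)$ with $|k^2+\mathrm{i}kb|=k\sqrt{k^2+b^2}\le C(k+\sqrt{kb})^2$, every Sobolev norm appearing in the analogue of (\ref{e3}) acquires the polynomial factor $(k+\sqrt{kb})$ in place of $k$ together with the exponential $e^{C(a+\sqrt{kb})}$.

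Finally I would run the trace and duality estimates exactly as in Lemma \ref{l1}: starting from (\ref{a5}) with these $u_0,v$, bound the boundary integrals by $C\|\mathcal{N}_q^1\|\,(\|u_0\|_{H^4}+\|\Delta u_0\|_{H^2})(\|v\|_{H^1}+\|\Delta v\|_{H^1})$, and then insert $\|u_0\|_{H^4}+\|\Delta u_0\|_{H^2}\le C(k+\sqrt{kb})^4e^{C(a+\sqrt{kb})}$ and $\|v\|_{H^1}+\|\Delta v\|_{H^1}\le C(k+\sqrt{kb})^3e^{C(a+\sqrt{kb})}$ to arrive at the claimed bound $|\mathcal{F}q(r\omega)|\le C(k+\sqrt{kb})^7e^{C(\sqrt{kb}+a)}\|\mathcal{N}_q^1\|$. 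I expect the only genuinely new difficulty to be the frequency construction of the second paragraph, namely satisfying the coupled constraint $2\,\eta_l\cdot\xi_l=kb$ while keeping $|\xi_l|$ of the optimal size $O(\sqrt{kb}+a)$, since it is precisely this balance that yields the sharp exponent $\sqrt{kb}$ rather than a larger one; the remaining trace and duality steps are routine repetitions of the unattenuated case.
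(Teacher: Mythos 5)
Your proposal is correct and follows essentially the same argument as the paper: your real parameters $c,d$ satisfying $c^2-d^2=L$, $2cd=kb$ are exactly the real and imaginary parts $X,Y$ of the paper's complex square root $\sqrt{k^2+a^2-r^2/4+\mathrm{i}kb}$, and your bound $d^2\le kb/2$ (via rationalization) matches the paper's $Y\le\sqrt{kb/2}$ (via $\sqrt{A^2+B^2}\le A+B$). The subsequent norm estimates, the factor $|k^2+\mathrm{i}kb|\le(k+\sqrt{kb})^2$, and the duality/trace step reproducing (\ref{e3}) coincide with the paper's proof.
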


\begin{proof}
We again use the exponential solutions. Let us denote $\zeta_l=\eta_l+\text{i}\xi_l, l=1,2$. We can choose $\omega^{'}, \omega^{''}\in \mathbb{R}^n$ satisfying$$\omega\cdot\omega^{'}=\omega\cdot\omega^{''}=\omega^{'}\cdot\omega^{''}=0 \quad \mbox{and} \quad| \omega^{'}|=|\omega^{''}|=1.$$
Now we set
$$\eta_1=-\frac{r}{2}\omega+\sqrt{k^2+a^2-\frac{r^2}{4}+\text{i}kb}\omega^{'},\quad\xi_1=a\omega^{''},$$
$$\eta_2=-\frac{r}{2}\omega-\sqrt{k^2+a^2-\frac{r^2}{4}+\text{i}kb}\omega^{'},\quad\xi_2=-a\omega^{''},$$
provided$$k^2+a^2\geq\frac{r^2}{4}.$$
It is clear that$$\zeta_l\cdot\zeta_l=k^2+\text{i}kb,\quad\zeta_1+\zeta_2=-r\omega, \quad\mbox{for } l=1,2.$$
Then we consider the following solutions
$$u_0(x)=e^{\text{i}\zeta_1\cdot x},\quad v(x)=e^{\text{i}\zeta_2\cdot x}.$$
From the equality (\ref{a5}) we derive
\begin{align*}
	\int_{\partial\Omega}\partial_\nu(\Delta u_1)v\,dS+\int_{\partial\Omega}\partial_\nu u_1(\Delta v)\,dS=-\mathcal{F}q(r\omega).
\end{align*}
Here $\mathcal{F}(\cdot)$ denotes the Fourier transform.

We write $\sqrt{k^2+a^2-\frac{r^2}{4}+\text{i}kb}=X+\text{i}Y$, $X>0$. Squaring the both sides yields $$k^2+a^2-\frac{r^2}{4}=X^2-Y^2, \quad kb=2XY.$$ Substituting $X=kb/2Y$
into the first equation and solving the resulting (quadratic) equation we obtain
\begin{align*}
Y&=\frac{\sqrt{-(4k^2+4a^2-r^2)+\sqrt{(4k^2+4a^2-r^2)^2+16k^2b^2}}}{2\sqrt{2}}\\
&\leq \sqrt{\frac{kb}{2}},
\end{align*}
where we used the inequality $\sqrt{A^2+B^2}\leq A+B$ for any positive $A, B$. It follows that
$$|e^{\text{i}x\cdot \zeta_1}|=e^{-x\cdot(Y\omega^{'}+a\omega^{''})}\leq e^{R_0(Y+a)}\leq e^{R_0(\sqrt{\frac{kb}{2}}+a)}$$
and
$$|e^{\text{i}x\cdot \zeta_2}|=e^{x\cdot(Y\omega^{'}+a\omega^{''})}\leq e^{R_0(Y+a)}\leq e^{R_0(\sqrt{\frac{kb}{2}}+a)}.$$
Similarly, we have
$$|\zeta_l |^2=\frac{r^2}{4}+a^2+\sqrt{(k^2+a^2-\frac{r^2}{4})^2+k^2b^2}\leq k^2+kb+2a^2, \quad l=1,2.$$
Using these estimates, we then derive that
\begin{align*}
&\|v\|_{H^1(\Omega)}\leq C\left(1+k^2+kb+2a^2\right)^{\frac{1}{2}}e^{R_0(\sqrt{\frac{kb}{2}}+a)}\leq C(k+\sqrt{kb})e^{C(\sqrt{kb}+a)},\\
&\|\Delta v\|_{H^1(\Omega)}=\|(k^2+\text{i}kb)v\|_{H^1(\Omega)}\leq C(k+\sqrt{kb})^3e^{C(\sqrt{kb}+a)},\\
&\|\Delta u_0\|_{H^2(\Omega)}=\|(k^2+\text{i}kb)u_0\|_{H^2(\Omega)}\leq C(k+\sqrt{kb})^4e^{C(\sqrt{kb}+a)},\\
&\|u_0\|_{H^4(\Omega)}\leq  C(k+\sqrt{kb})^4e^{C(\sqrt{kb}+a)}.
\end{align*}

Proceeding similarly as the case in section 2, we have
\begin{align*}
|\mathcal{F}q(r\omega)|&\leq C\|\mathcal{N}_q^1\|\left((k+\sqrt{kb})^4+(k+\sqrt{kb})^4\right)\left((k+\sqrt{kb})+(k+\sqrt{kb})^3\right)e^{C(\sqrt{kb}+a)}\\
&\leq C(k+\sqrt{kb})^7e^{C(\sqrt{kb}+a)}\|\mathcal{N}_q^1\|
\end{align*}
from (\ref{e3}). This concludes the proof.

\end{proof}

The following lemma is a corollary of Lemma \ref{l21}.
\begin{lemm}
Suppose that the assumptions in Theorem \ref{th2} hold. Let $R>0$. Then for $k\geq 1, r\geq 0 \mbox{ and } \omega \in \mathbb{R}^n$, with $|\omega|=1$, the following estimates hold true: if $0\leq r \leq k+\sqrt{kb}+R $ then
\begin{align}
|\mathcal{F}q(r\omega)|	\leq C(k+\sqrt{kb})^7e^{C(\sqrt{kb}+R)}\|\mathcal{N}_q^1\|;
\label{z1}\end{align}
if $r\geq  k+\sqrt{kb}+R$ then
\begin{align}
|\mathcal{F}q(r\omega)|	\leq C(k+\sqrt{kb})^7e^{C(\sqrt{kb}+r)}\|\mathcal{N}_q^1\|.
\label{z2}\end{align}
\end{lemm}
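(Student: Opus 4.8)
The plan is to deduce this lemma directly from Lemma \ref{l21} by choosing the free parameter $a$ appropriately in each of the two ranges of $r$, exactly mirroring the way the corresponding corollary of Lemma \ref{l1} was obtained in Section 2. Recall that Lemma \ref{l21} supplies the bound $|\mathcal{F}q(r\omega)| \leq C(k+\sqrt{kb})^7 e^{C(\sqrt{kb}+a)}\|\mathcal{N}_q^1\|$ for any $a>0$ subject to the admissibility constraint $k^2+a^2 > r^2/4$. The whole task therefore reduces to picking, in each regime, a value of $a$ that both satisfies this constraint and reproduces the exponential factor claimed in (\ref{z1})--(\ref{z2}).

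For the far range $r \geq k+\sqrt{kb}+R$, I would take $a=r$. The constraint $k^2+r^2 > r^2/4$ is then automatic, and Lemma \ref{l21} immediately gives $|\mathcal{F}q(r\omega)| \leq C(k+\sqrt{kb})^7 e^{C(\sqrt{kb}+r)}\|\mathcal{N}_q^1\|$, which is precisely (\ref{z2}). This step needs no further work.

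The near range $0 \leq r \leq k+\sqrt{kb}+R$ is where the single point of care arises. The naive choice $a=R$, which sufficed in the unattenuated setting, no longer guarantees $k^2+a^2 > r^2/4$: since $r$ may now reach $k+\sqrt{kb}+R$, the quantity $r^2/4$ can be of order $kb$ and overwhelm $k^2+R^2$ when $b$ is large. To remedy this I would instead take $a=\sqrt{kb}+R$. One then checks, using the elementary bound $(k+\sqrt{kb}+R)^2 \leq 4\bigl(k^2+kb+2\sqrt{kb}R+R^2\bigr)$, that
\begin{align*}
k^2+a^2 = k^2+kb+2\sqrt{kb}R+R^2 \geq \frac{(k+\sqrt{kb}+R)^2}{4} \geq \frac{r^2}{4},
\end{align*}
so the admissibility constraint holds on the whole near range. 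With this $a$ the exponential factor coming from Lemma \ref{l21} is $e^{C(\sqrt{kb}+a)}=e^{C(2\sqrt{kb}+R)}$, which is dominated by $e^{C'(\sqrt{kb}+R)}$ after doubling the constant on the $\sqrt{kb}$ term, and the prefactor $(k+\sqrt{kb})^7$ is untouched; this yields (\ref{z1}).

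The main (and essentially only) obstacle is thus recognizing that the unattenuated choice $a=R$ breaks down and replacing it by $a=\sqrt{kb}+R$, after which the verification of $k^2+a^2>r^2/4$ is the elementary inequality displayed above. Once that choice is in hand, both estimates follow by a direct appeal to Lemma \ref{l21}, the only bookkeeping being the harmless absorption of the extra $\sqrt{kb}$ into the exponential constant.
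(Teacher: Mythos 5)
Your proposal is correct and follows exactly the paper's own argument: the paper likewise deduces the lemma from Lemma \ref{l21} by taking $a=\sqrt{kb}+R$ in the near range $0\leq r\leq k+\sqrt{kb}+R$ and $a=r$ in the far range $r\geq k+\sqrt{kb}+R$. Your explicit verification of the admissibility constraint $k^2+a^2>r^2/4$ and of the absorption of the extra $\sqrt{kb}$ into the exponential constant fills in details the paper leaves implicit, but the route is the same.
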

\begin{proof}
We can prove Lemma 3.3 immediately by taking $a=\sqrt{kb}+R$ when $0\leq r \leq k+\sqrt{kb}+R $ , and taking $a=r$ when $ r \geq k+\sqrt{kb}+R $ in Lemma \ref{l21}.
\end{proof}

Now we prove Theorem \ref{th2}.
\begin{proof}[Proof of Theorem 3.1]
Similarly to the proof in Theorem \ref{th1}, we have		
\begin{align}
\|q\|^2_{H^{-s}(\mathbb{R}^n)}
=C&\left(\int_0^{k+\sqrt{kb}+R}\int_{|\omega|=1}|\mathcal{F}q(r\omega)|^2(1+r^2)^{-s}r^{n-1}\,d\omega dr\right.\nonumber\\
&+\int_{k+\sqrt{kb}+R}^T\int_{|\omega|=1}|\mathcal{F}q(r\omega)|^2(1+r^2)^{-s}r^{n-1}\,d\omega dr\nonumber\\
&\left.+\int_T^\infty\int_{|\omega|=1}|\mathcal{F}q(r\omega)|^2(1+r^2)^{-s}r^{n-1}\,d\omega dr\right)\nonumber\\
=:C&(I_1+I_2+I_3),
\label{zz}\end{align}
where $R>0$ and $T\geq k+\sqrt{kb}+R$	are parameters which will be chosen later.
In addition, it holds that
\begin{align}
	I_3\leq CT^{-m}\left(\varepsilon\|q\|_{H^{-s}(\mathbb{R}^n)}^2+\frac{M}{\varepsilon}\right)
\label{z3}\end{align}
for $\varepsilon>0$, where $m:=2s-n$.

On the other hand, by estimate (\ref{z1}), we can obtain
\begin{align}
	I_1&\leq C\int_0^{k+\sqrt{kb}+R}(k+\sqrt{kb})^{14}e^{C(\sqrt{kb}+R)}\|\mathcal{N}_q^1\|^2(1+r^2)^{-s}r^{n-1}\,dr\nonumber\\
	&\leq C(k+\sqrt{kb})^{14}e^{C(\sqrt{kb}+R)}\|\mathcal{N}_q^1\|^2\int_0^\infty(1+r^2)^{-s}r^{n-1}\,dr\nonumber\\
	&=C(k+\sqrt{kb})^{14}e^{C(\sqrt{kb}+R)}\|\mathcal{N}_q^1\|^2.
\label{z4}\end{align}
In the same way, using estimate (\ref{z2}), we have	
\begin{align}
	I_2&\leq C\int_{k+\sqrt{kb}+R}^T(k+\sqrt{kb})^{14}e^{C(\sqrt{kb}+r)}\|\mathcal{N}_q^1\|^2(1+r^2)^{-s}r^{n-1}\,dr\nonumber\\
	&\leq C(k+\sqrt{kb})^{14}\|\mathcal{N}_q^1\|^2\int_{k+\sqrt{kb}+R}^Te^{C(\sqrt{kb}+r)}(1+r^2)^{-s}r^{n-1}\,dr\nonumber\\
	&\leq C(k+\sqrt{kb})^{14}e^{C(\sqrt{kb}+T)}\|\mathcal{N}_q^1\|^2\int_{k+\sqrt{kb}+R}^T(1+r^2)^{-s}r^{n-1}\,dr\nonumber\\
	&\leq C(k+\sqrt{kb})^{14}e^{C(\sqrt{kb}+T)}\|\mathcal{N}_q^1\|^2\int_0^\infty (1+r^2)^{-s}r^{n-1}\,dr\nonumber\\	
	&\leq C(k+\sqrt{kb})^{14}e^{C(\sqrt{kb}+T)}\|\mathcal{N}_q^1\|^2,
\label{z5}\end{align}
where $s>n/2$. Combining (\ref{z3})-(\ref{z5}) gives

\begin{align}
\|q\|^2_{H^{-s}(\mathbb{R}^n)}\leq &C(I_1+I_2+I_3)\nonumber\\
\leq &C(k+\sqrt{kb})^{14}e^{C(\sqrt{kb}+R)}\|\mathcal{N}_q^1\|^2+ C(k+\sqrt{kb})^{14}e^{C(\sqrt{kb}+T)}\|\mathcal{N}_q^1\|^2\nonumber\\
&+ CT^{-m}\left(\varepsilon\|q\|_{H^{-s}(\mathbb{R}^n)}^2+\frac{M}{\varepsilon}\right)\nonumber\\
=& CT^{-m}\varepsilon\|q\|_{H^{-s}(\mathbb{R}^n)}^2+C(k+\sqrt{kb})^{14}e^{C(\sqrt{kb}+R)}\|\mathcal{N}_q^1\|^2\nonumber\\
&+ C(k+\sqrt{kb})^{14}e^{C(\sqrt{kb}+T)}\|\mathcal{N}_q^1\|^2
+\frac{CT^{-m}}{\varepsilon}.
\label{z6}\end{align}

To continue, we consider two cases:
$$(i)k+\sqrt{kb}+R\leq p\log\frac{1}{A}$$
and
$$(ii)k+\sqrt{kb}+R\geq p\log\frac{1}{A},$$
where $A=\|\mathcal{N}_q^1\|^2, R>0$ and $p>0$ are constants which can be determined later.

For the case (i), taking $$\varepsilon=\frac{T^m}{2C}$$and $R>0$,
we deduce that
\begin{align}
\|q\|^2_{H^{-s}(\mathbb{R}^n)}\leq C (k+\sqrt{kb})^{14}e^{C\sqrt{kb}}A+ C(k+\sqrt{kb})^{14}e^{C(\sqrt{kb}+T)}A+CT^{-2m}
\label{z7}\end{align}
for any $T\geq k+\sqrt{kb}+R$ by (\ref{z6}).
Now we choose $T=p\log\frac{1}{A}$, which is greater than or equal to $k+\sqrt{kb}+R$ by the condition (i). Our current aim is to show that there exists $C_1\geq 0$ such that
\begin{align}
(k+\sqrt{kb})^{14}e^{C(\sqrt{kb}+T)}A+T^{-2m}\leq 2C_1\left(k+\sqrt{kb}+\log\frac{1}{A}\right)^{-2m}.
\label{z8}\end{align}
Substituting (\ref{z8}) into (\ref{z7}) clearly implies (\ref{m}). Now to derive (\ref{z8}), it is enough to prove that

\begin{align}
(k+\sqrt{kb})^{14}e^{C(\sqrt{kb}+T)}A\leq C_1\left(k+\sqrt{kb}+\log\frac{1}{A}\right)^{-2m}
\label{z9}\end{align}
and
\begin{align}
T^{-2m}\leq C_1\left(k+\sqrt{kb}+\log\frac{1}{A}\right)^{-2m}.
\label{z10}\end{align}
Remark that (\ref{z10}) is equivalent to
\begin{align}
	C_1^{-1/2m}\left(k+\sqrt{kb}+\log\frac{1}{A}\right)\leq p\log\frac{1}{A} .
\label{z11}	\end{align}
Since we have
$$k+\sqrt{kb}+\log\frac{1}{A}\leq (k+\sqrt{kb}+R)+\log\frac{1}{A}\leq (p+1)\log\frac{1}{A}$$
by (i), condition (\ref{z11}) holds whenever
\begin{align}
	C_1^{-1/2m}\leq \frac{p}{p+1}.	
\label{z16}\end{align}
Now we turn to (\ref{z9}). It is clear that (\ref{z9}) is equivalent to
\begin{align}
	14\log (k+\sqrt{kb})+C\sqrt{kb}+(Cp-1)\log \frac{1}{A} +2m\log \left(k+\sqrt{kb}+\log\frac{1}{A}\right)\leq \log C_1
\label{z12}\end{align}
since $T=p\log\frac{1}{A}$. Using (i), we can obtain
$$\sqrt{kb}\leq p\log\frac{1}{A},$$
$$\log (k+\sqrt{kb})\leq \log p+\log(\log\frac{1}{A}),$$
and
$$\log \left(k+\sqrt{kb}+\log\frac{1}{A}\right)\leq \log \left(p\log\frac{1}{A}+\log\frac{1}{A}\right)=\log(p+1)+\log(\log\frac{1}{A}).$$
Hence (\ref{z12}) is verified if we can show that
\begin{align}
14\log p+2m\log(p+1)+(2Cp-1)\log \frac{1}{A} +(2m+14)\log(\log\frac{1}{A})\leq \log C_1.
\label{z13}\end{align}
Choosing
\begin{align}
	p\leq \frac{1}{4C},
\label{z14}\end{align}
we can bound the left-hand side of (\ref{z13}) by
\begin{align*}
	&(\mbox{LHS of }(\ref{z13}))\\
	&\leq 14\log\frac{1}{4C}+2m\log(\frac{1}{4C}+1)-\frac{1}{2}\log\frac{1}{A}+(2m+14)\log(\log\frac{1}{A})\\
	&\leq 14\log\frac{1}{4C}+2m\log(\frac{1}{4C}+1)+\max_{z\geq 2}\left(-\frac{1}{2}z+(2m+14)\log z\right)\\
	&=14\log\frac{1}{4C}+2m\log(\frac{1}{4C}+1)+2(m+7)(\log(4m+28)-1).
\end{align*}
Therefore, condition (\ref{z13}) (i.e.(\ref{z9})) is satisfied provided
\begin{align}
14\log\frac{1}{4C}+2m\log(\frac{1}{4C}+1)+2(m+7)(\log(4m+28)-1)\leq \log{C_1}.	
\label{z15}\end{align}

Next we consider case (ii). We choose $T=k+\sqrt{kb}+R$ and observe that the term $I_2$ in (\ref{zz}) does not appear in this case. Hence, instead of (\ref{z6}), we have
$$\|q\|^2_{H^{-s}(\mathbb{R}^n)}\leq  CT^{-m}\varepsilon\|q\|_{H^{-s}(\mathbb{R}^n)}^2+C(k+\sqrt{kb})^{14}e^{C(\sqrt{kb}+R)}\|\mathcal{N}_q^1\|^2+\frac{CT^{-m}}{\varepsilon}.$$
Set $\varepsilon=T^m/2C$ and $R>0$, we obtain
\begin{align*}
\|q\|^2_{H^{-s}(\mathbb{R}^n)}&\leq C (k+\sqrt{kb})^{14}e^{C\sqrt{kb}}A+CT^{-2m}\\
&=C(k+\sqrt{kb})^{14}e^{C\sqrt{kb}}A+C(k+\sqrt{kb}+R)^{-2m}
\end{align*}
which implies the desired estimate (\ref{m}) since from condition (ii) we have
\begin{align*}
k+\sqrt{kb}+R&\geq \frac{k+\sqrt{kb}}{2}+\frac{k+\sqrt{kb}+R}{2}\geq \frac{k+\sqrt{kb}}{2}+\frac{p}{2}\log\frac{1}{A}\\
&\geq \frac{\min\{p,1\}}{2}\left(k+\sqrt{kb}+\log\frac{1}{A}\right).
\end{align*}	
	
As the last step, we choose appropriate $R$,  $p$ and $C_1$ to complete the proof. We first pick an arbitrary positive constant $R$ and then choose $p$ small enough satisfying (\ref{z14}). Finally, we take $C_1$ large enough satisfying (\ref{z16}) and (\ref{z15}).

\end{proof}

\section{conclusion}

The original inverse problem for the biharmonic potential is ill-posed and nonlinear. Like \cite{isakovlu2020linearized}, to study stability we prefer to focus on the most serious difficulty, ill-conditioning, and we linearized inverse problem to avoid additional difficulties with multiple local minima. We found that recovery of the potential from all boundary data at a fixed wavenumber is dramatically improving for a larger $k$.  We will discuss the problem for some nonlinear biharmonic operators in our forthcoming papers.

\end{document}